\theoremstyle{plain}
\newtheorem{theorem}{Theorem}[section]
\newtheorem{lemma}[theorem]{Lemma}
\newtheorem{prop}[theorem]{Proposition}
\theoremstyle{definition}
\numberwithin{equation}{section}
\def\be{\begin{equation}}
\def\ee{\end{equation}}
\begin{document}

\title[Constant Mean Curvature Surfaces in the Hyperbolic Space]
{Boundary Expansions for Constant Mean Curvature Surfaces in the Hyperbolic Space}
\author{Qing Han}
\address{Department of Mathematics\\
University of Notre Dame\\
Notre Dame, IN 46556} \email{qhan@nd.edu}
\address{Beijing International Center for Mathematical Research\\
Peking University\\
Beijing, 100871, China} \email{qhan@math.pku.edu.cn}
\author{Yue Wang}
\address{School of Mathematical Sciences\\
Peking University\\
Beijing, 100871, China}
\address{Beijing International Center for Mathematical Research\\
Peking University\\
Beijing, 100871, China} \email{1201110027@pku.edu.cn}

\begin{abstract}
We study expansions near the boundary of solutions to the Dirichlet problem
for the constant mean curvature equation in the hyperbolic space. With a characterization of remainders
of the expansion by multiple integrals, we establish optimal asymptotic expansions
of solutions with boundary values of finite regularity and demonstrate a slight loss of
regularity for coefficients.
\end{abstract}

\thanks{The first author acknowledges the support of NSF
Grant DMS-1404596. The second author acknowledges the support of NSFC Grant NSFC11571019.}
\maketitle

\section{Introduction}\label{sec-Intro}

Complete minimal hypersurfaces in the hyperbolic space
$\mathbb H^{n+1}$ demonstrate similar properties as
those in the Euclidean space $\mathbb R^{n+1}$ in the aspect of the interior regularity and
different properties in the aspect of the boundary regularity.
Anderson \cite{Anderson1982Invent}, \cite{Anderson1983}
studied complete area-minimizing submanifolds
and proved that, for any
given closed embedded $(n-1)$-dimensional submanifold
$N$ at the infinity of $\mathbb H^{n+1}$,
there exists
a complete
area minimizing integral $n$-current which is asymptotic to
$N$ at infinity.
In the case $n\le 6$, these currents are
embedded smooth submanifolds;
while in the case $n\ge 7$, as in the Euclidean case, there can be closed singular set
of Hausdorff dimension at most $n-7$.
Hardt and
Lin \cite{Hardt&Lin1987}  discussed the $C^1$-boundary regularity of such hypersurfaces.
Subsequently,
Lin \cite{Lin1989Invent} studied the higher order boundary regularity.
In a more general setting, Graham and Witten \cite{Graham&Witten1999}
studied $n$-dimensional minimal surfaces
of any codimension in asymptotically hyperbolic manifolds
and derived an expansion of the normalized area up to order $n+1$. Recently, Han and Jiang
\cite{HanJiang} studied the asymptotic expansions of minimal surfaces in the hyperbolic space
and derived optimal estimates for the remainders in the context of the finite regularity.

It is natural to study constant mean curvature hypersurfaces in the hyperbolic space and investigate
the differences and similarities between the nonzero mean curvature case and the zero mean curvature case.
Tonegawa \cite{Tonegawa1996MathZ} discussed the constant mean curvature hypersurfaces in the hyperbolic space
and established several regularity results near boundary. In this paper, we will study expansions of the
constant mean curvature hypersurfaces in the hyperbolic space near boundary.

We first introduce the differential equation for constant mean curvature hypersurfaces in the hyperbolic space near
infinity. Denote by $x=(x', x_n)$ points in $\mathbb R^n$
and consider the function $u=u(x)$ in $B_1^+\subset\mathbb R^n$ satisfying
\begin{equation}\label{eq-Intro-Equation}
\Delta u-\frac{u_iu_j}{1+|Du|^2}u_{ij}-\frac{n}{x_n}(u_n-H\sqrt{1+|Du|^2})=0\quad\text{in }B_1^+,\end{equation}
and
\begin{equation}\label{eq-Intro-BoundaryValue}
u=\varphi\quad\text{on }B_1',\end{equation}
where $\varphi$ is a given function on $B_1'$ and $H$ a contant in $B_1^+$.
The graph of $u$ for $\{x_n>0\}$ is a hypersurface in $\mathbb H^{n+1}$ with its mean curvature given by $H$
and its asymptotic boundary given by the graph of $\varphi$.

In this paper, we discuss the boundary regularity of $u$
by expanding $u$ in terms of $x_n$.
We discuss a general case and allow $H$ to be a function in $\bar B_1^+$.
We can
write formal expansions for solutions of \eqref{eq-Intro-Equation}-\eqref{eq-Intro-BoundaryValue}
in the following form:
\begin{align*}
u=\sum_{i=0}^nc_ix^i+\sum_{i=n+1}^\infty
\sum_{j=0}^{N_i}c_{i,j} x_n^i (\log x_n)^j,
\end{align*}
where $c_i$ and $c_{i,j}$ are functions of $x'\in B_1'$
and $N_i$ is a nonnegative integer depending on $i$, with $N_{n+1}=1$.
A formal calculation can only determine {\it finitely many terms}
in the formal expansion of $u$.
In fact, the coefficients $c_0, c_1$, $\cdots, c_{n}$, and $c_{n+1,1}$
have explicit expressions in terms of $\varphi$ and $H$. For example, we have
$$c_0=\varphi,$$
and
$$c_{1}=H_0\sqrt{\frac{1+|D_{x'}\varphi|^2}{1-H_0^{2}}},$$
where $H_0=H(\cdot, 0)$. In this paper, we always assume $|H|<1$ in $\bar B_1^+$.
See Section \ref{sec-FormalExpansions} for expressions of other $c_i$. We note that
logarithmic terms usually appear in all dimensions, except in the dimension 2. In fact, for $n=2$ and
constant $H$,
$c_{i,j}=0$ for all $i\ge 3$ and $j\ge 1$. This was observed by
Tonegawa \cite{Tonegawa1996MathZ}.

We point out that the case of nonzero $H$ is different from the case $H\equiv0$. For $H\equiv0$, we have,
for $n$ even,
\begin{align*}
u=\varphi+c_2x_n^2+c_4x_n^4+\cdots
+c_{n}x_n^{n}+\sum_{i=n+1}^\infty
c_{i} x_n^i,
\end{align*}
and, for $n$ odd,
\begin{align*}
u=\varphi+c_2x_n^2+c_4x_n^4+\cdots
+c_{n-1}x_n^{n-1}+\sum_{i=n+1}^\infty
\sum_{j=0}^{N_i}c_{i,j} x_n^i (\log x_n)^j.
\end{align*}
We note that the odd powers $i$ of $x_n$ are absent for $i\le n$ and $c_{n+1, 1}=0$ for $n$ even.
For nonzero $H$, all powers of $x_n$ appear and $c_{n+1,1}$ is present in general except for $n=2$
and constant $H$.

Logarithmic terms also appear in other problems, such as
the singular Yamabe problem
in \cite{ACF1982CMP}, \cite{Loewner&Nirenberg1974} and
\cite{Mazzeo1991}, the complex Monge-Amp\`{e}re equations in \cite{ChengYau1980CPAM},
\cite{Fefferman1976} and \cite{LeeMelrose1982},
and the asymptotically hyperbolic Einstein metrics
in \cite{Anderson2003}, \cite{Biquad2010},
\cite{Chrusciel2005} and
\cite{Hellimell2008}. In fact, Fefferman \cite{Fefferman1976}
observed that logarithmic terms should appear
in the expansion.

Our goal in this paper
is to discuss the relation between $u$ and its formal expansions 
for boundary values
of finite regularity and derive sharp estimates of remainders for the asymptotic expansions.
We will also investigate the regularity property of nonlocal coefficients in the
expansions.

Let $k\ge n+1$ be an integer and set
\begin{align}\label{b1b-v}
u_k=\varphi+c_1x_n+c_2x_n^2+\cdots
+c_{n}x_n^{n}+\sum_{i=n+1}^k
\sum_{j=0}^{\left[\frac{i-1}{n}\right]}c_{i,j} x_n^i (\log x_n)^j,
\end{align}
where $c_i$ and $c_{i,j}$ are 
functions of $x'\in B_1'$.
We point out that the highest order in $u_k$ is given by $x_n^k$.
According to the pattern in this expansion, if we intend to continue to
expand $u_k$, the next term has an order of $x_n^{k+1}(\log x_n)^{\left[\frac{k}{n}\right]}$.
In these expansions,
$c_{n+1,0}$ is the coefficient of the first global term and has no explicit expressions
in terms of $\varphi$.

In this paper, we study the regularity and growth of the remainder $u-u_k$ by following
Han and Jiang
\cite{HanJiang} closely.
We will prove the following result. As mentioned earlier, $H$ in \eqref{eq-Intro-Equation} is
a function in $B_1^+$ instead of a constant.

\begin{theorem}\label{thrm-Main-v}
For some integer $k\ge n+1$ and some constant $\alpha\in (0,1)$,
let $\varphi\in C^{k,\alpha}(B'_1)$ and $H\in C^{k-1,\alpha}(\bar B_1^+)$ be given functions,
with $|H|<1$ in $\bar B_1^+$,
and $u\in C(\bar B^+_1)\cap C^{k,\alpha}(B^+_1)$ be
a solution of \eqref{eq-Intro-Equation}-\eqref{eq-Intro-BoundaryValue}. Then,
there exist functions  $c_i$, $c_{i,j}\in C^{k-i, \epsilon}(B_1')$, for $i=0, 1, \cdots, k$
and any $\epsilon\in (0,\alpha)$,
such that,
for $u_k$ defined as in \eqref{b1b-v},
for any $m=0, 1, \cdots, k$, any $\epsilon\in (0,\alpha)$, and any $r\in (0, 1)$,
\begin{equation}\label{eq-MainRegularity}\partial_{x_n}^m (u-u_k)\in C^{\epsilon}(\bar B^+_r),
\end{equation}
and, for any $(x',x_n)\in
B^+_{1/2}$,
\begin{equation}\label{eq-MainEstimate}|\partial_{x_n}^m (u-u_{k})(x',x_n)|
\le C x_n^{k-m+\alpha},
\end{equation}
for some positive constant $C$ depending only on $n$, $k$, $\alpha$,
the $L^\infty$-norm of $u$ in $B_1^+$, the $C^{k, \alpha}$-norm  of $\varphi$ in
$B_1'$, and the $C^{k-1, \alpha}$-norm  of $H$ in
$\bar B_1^+$. If, in addition, $c_{n+1,1}=0$ on $B_1'$, then
$u\in C^{k,\epsilon}(\bar B^+_r)$ for any $r\in(0,1)$ and any $\epsilon\in (0,\alpha)$.
\end{theorem}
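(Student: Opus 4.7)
The plan is to adapt the iterative bootstrap strategy of Han--Jiang \cite{HanJiang}, developed for minimal surfaces ($H \equiv 0$), to the present situation where $H$ is a possibly nonconstant function with $|H| < 1$. The structural difference is that all powers of $x_n$ up to order $n$ now enter the expansion, and the logarithmic coefficient $c_{n+1,1}$ is in general nonzero; both features are driven by the term $-\frac{n}{x_n}H\sqrt{1+|Du|^2}$ in \eqref{eq-Intro-Equation}. Throughout, the relevant linear model is the radial Bessel-type operator $L := \partial_{x_n}^2 - (n/x_n)\partial_{x_n}$, whose indicial roots are $0$ and $n+1$, which is precisely why logarithms first appear at order $x_n^{n+1}$.

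The first step is the formal expansion. I would multiply \eqref{eq-Intro-Equation} by $x_n$, substitute the ansatz \eqref{b1b-v}, and match coefficients of $x_n^i$ and $x_n^i(\log x_n)^j$ in the resulting identity. This determines $c_0 = \varphi$, then $c_1$ from $u_n = H_0\sqrt{1+|D_{x'}\varphi|^2}$ at $\{x_n=0\}$, then $c_2,\dots,c_n$ and the log coefficient $c_{n+1,1}$, all explicitly in terms of $\varphi$, $H$ and their tangential derivatives up to order $\le n$; the remaining coefficients $c_{n+1,0}$ (nonlocal, read off from the solution $u$ itself) and $c_{i,j}$ for $i \ge n+2$ are obtained recursively by solving $L(c_{i,j}x_n^i(\log x_n)^j) = (\text{lower-order source})$. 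The claimed regularity $c_i,c_{i,j}\in C^{k-i,\epsilon}(B_1')$ is read off this recursion from the hypotheses on $\varphi$ and $H$; the loss from $\alpha$ to any $\epsilon<\alpha$ comes from estimating Hölder seminorms of products of powers of $\log x_n$ with Hölder functions, where each factor $\log x_n$ must be absorbed by a small power $x_n^{\alpha-\epsilon}$.

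The second step is the sharp remainder estimate. With $u_k$ in hand, set $v_k = u - u_k$ and derive its PDE: because $u_k$ solves \eqref{eq-Intro-Equation} formally to order $k$, one obtains
\begin{equation*}
x_n\Delta v_k - n\,\partial_{x_n}v_k + (\text{tangential and nonlinear lower-order terms}) = f_k,
\end{equation*}
with $|f_k(x)| \le C x_n^{k-1+\alpha}(1+|\log x_n|)^N$ for some $N$. The technical core, which I would quote or adapt from \cite{HanJiang}, is an ODE-comparison lemma for the operator $x_n\partial_{x_n}^2 - n\partial_{x_n}$: a solution vanishing at $\{x_n=0\}$ of such an equation with source vanishing to order $x_n^{k-1+\alpha}$ vanishes itself to order $x_n^{k+\alpha}$, with analogous pointwise control on the normal derivatives $\partial_{x_n}^m v_k$ for $m \le k$. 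Applied iteratively (one order of $x_n$ per bootstrap step) and combined with interior Schauder estimates on half-balls shrinking to the boundary, this yields \eqref{eq-MainEstimate} and \eqref{eq-MainRegularity}. For the case $c_{n+1,1}=0$, a straightforward induction along the Step~1 recursion shows that every $c_{i,j}$ with $j\ge 1$ must vanish: once the sources feeding the relation $L(c_{i,j}x_n^i(\log x_n)^j) = \cdots$ are log-free at each level, only $j=0$ terms can appear. Consequently $u_k$ becomes a polynomial in $x_n$ with $C^{k-i,\epsilon}$ coefficients, and \eqref{eq-MainEstimate} for $m=0,\dots,k$ upgrades to $u \in C^{k,\epsilon}(\bar B_r^+)$.

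The main obstacle will be Step~2. The nonlinearities $u_i u_j u_{ij}/(1+|Du|^2)$ and $H\sqrt{1+|Du|^2}$ must be expanded simultaneously in $x_n$ and $\log x_n$ to exactly the right order so that their contribution falls into the source $f_k$ rather than contaminating the principal part of the equation for $v_k$; the combinatorics of the logarithmic factors under these nonlinear compositions, together with the fact that successive $c_i$ carry successively less tangential regularity, is what forces the trade $\alpha \to \epsilon$ and is the place where care is needed to recover the optimal exponent in \eqref{eq-MainEstimate}.
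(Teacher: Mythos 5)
Your two-step plan — formal expansion followed by a bootstrap of the remainder via the indicial operator $\partial_{x_n}^2-(n/x_n)\partial_{x_n}$ and the Han--Jiang normal-direction machinery — correctly captures the beginning and the end of the paper's argument, but it skips over the two middle layers where most of the $H$-specific work actually lives, and has a circularity in Step~2 as stated.

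First, you cannot start the remainder bootstrap from scratch. Before anything like "$|f_k|\le Cx_n^{k-1+\alpha}(1+|\log x_n|)^N$'' is available, one needs an a priori decay rate for $u-\varphi$, then $u-\varphi-c_1x_n$, and so on up to order $n+1$. Continuity of $u$ up to the boundary gives no rate, and Schauder on shrinking half-balls gives nothing until you have some vanishing order to feed in. The paper obtains this anchor in Section~3 by an explicit chain of comparison functions (Lemmas~3.1--3.3 and Theorem~3.4): barriers built from $\varphi(x_0')+At+B(|x'-x_0'|^2+t^2)^{1/2}$, then $At^{1+\varepsilon}+B(|x'-x_0'|^2+t^2)^{\varepsilon/2}t$, etc. The verification $tQ(\overline w)\le 0$ there is exactly where the hypothesis $|H|<1$ and the lower-order term $-(n/t)(u_n-H\sqrt{1+|Du|^2})$ bite, with a genuine case split $H\ge 0$ versus $H<0$ and the algebraic identity $1+|D_{x'}\varphi|^2+c_1^2=(1+|D_{x'}\varphi|^2)/(1-H_0^2)$ playing a decisive role. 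None of that can be quoted wholesale from the $H\equiv 0$ paper; it has to be redone.

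Second, between the barrier estimates and the normal-direction ODE, the paper devotes Section~4 to tangential regularity, proving $v/t^2$, $Dv/t$, $D^2(u-\varphi)$ and their $x'$-derivatives lie in $C^\alpha$ up to the boundary. The key device is to replace $c_1(x')$ by its Taylor polynomial at a base point $x_0'$, so the frozen-coefficient linear equation for $\widetilde v$ has $C^\alpha$ data and the degenerate first-order coefficient $b_n/A_{nn}\to -n$ at $t=0$; one then applies a maximum-principle argument with weights $\mu_1 t(\mu_3|x'|^2+t^2)^{(\alpha+1)/2}+\mu_2 t^{2+\alpha}$ to capture the coefficient $c_2$, and scaled Schauder to upgrade. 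Your "interior Schauder estimates on half-balls'' gesture covers the upgrade, but not the preceding freezing/maximum-principle step that makes the Schauder input $\delta^\alpha$-small. Only after this tangential control does the paper write the equation as an ODE $\partial_{nn}v-(n/t)\partial_n v=F$ with $F$ a smooth function of $t$, $v_t$, $v_t^2/t$, $D_{x'}v_t$, $D^2_{x'}u$, and then invoke Han--Jiang's normal-direction argument — which is the step you correctly identify as the finish.

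Finally, about circularity: $u_k$ contains the nonlocal coefficient $c_{n+1,0}$ (and higher global $c_{i,j}$), so "with $u_k$ in hand, set $v_k=u-u_k$ and derive its PDE'' is not a starting point; those coefficients are \emph{outputs} of the analysis. The paper resolves this by proving a remainder estimate against the purely local truncation $\varphi+c_1t+\cdots+c_nt^n+c_{n+1,1}t^{n+1}\log t$ first (Theorem~\ref{thrm-order n+1}), and only afterward letting the ODE analysis produce the global coefficients one order at a time. Your plan needs to be reorganized the same way.
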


We note that the estimate \eqref{eq-MainEstimate} is optimal and
that there is a slight loss of regularity of $c_{i,j}$ and $u-u_k$, for $i, k\ge n+1$.
In fact,  there is actually no loss of regularity for coefficients of local terms.
If $\varphi\in C^{k,\alpha}(B'_1)$ for some
$k\ge 2$ and $\alpha\in (0,1)$, then
$c_i\in C^{k-i, \alpha}(B_1')$, for $0\le i\le \min\{k,n\}$,
and $c_{n+1,1}\in C^{k-n-1, \alpha}(B_1')$ if $k\ge n+1$.
Moreover,  if $\varphi\in C^{k,\alpha}(B'_1)$ for some
$2\le k\le n$ and $\alpha\in (0,1)$, then
$u\in C^{k,\alpha}(\bar B^+_r)$ for any $r\in(0,1)$.
(See Theorem 3.2 \cite{Tonegawa1996MathZ}.)

As we see, $c_{n+1,1}$, the coefficient of the first logarithmic term in \eqref{b1b-v},
is given by an expression involving derivatives of $\varphi$ up to order
$n+1$. The final part of Theorem \ref{thrm-Main-v} asserts that $c_{n+1,1}\neq 0$ is the obstacle
to the higher regularity of solutions $u$. Theoretically, it is a routine process to calculate
$c_{n+1,1}$. However, it is difficult to identify the geometric meaning of this quantity.
For $n=3$ and $H\equiv0$, Han and Jiang
\cite{HanJiang} demonstrated that $c_{4,1}$ is related to the Willmore functional.
We will prove that $c_{3,1}\equiv 0$ for $n=2$ and constant $H$ in Section \ref{sec-FormalExpansions}.
As a consequence, for $n=2$ and constant $H$, if
$\varphi\in C^{k,\alpha}(B'_1)$ for some integer $k\ge 3$ and some constant $\alpha\in (0,1)$, then
$u\in C^{k,\epsilon}(\bar B^+_r)$ for any $r\in(0,1)$ and any $\epsilon\in (0,\alpha)$.
(See also Theorem 4.1 \cite{Tonegawa1996MathZ}.) We point out that there is a loss
of regularity and it is not clear whether we have   $u\in C^{k,\alpha}(\bar B^+_{1/2})$.

If $\varphi\in C^\infty(B'_1)$, then the estimate \eqref{eq-MainEstimate}
holds for all $m\ge 0$,
all $k\ge \max\{n+1,m\}$ 
and all $\alpha\in (0,1)$. In fact, a similar estimate holds for $x'$-derivatives of
$u-u_k$. This implies in particular that $u$ is
{\it polyhomogeneous}. Refer to
\cite{ACF1982CMP} or \cite{Mazzeo1991} 
for the definition of polyhomogeneity.

We finish the introduction with a brief outline of the paper.
In Section \ref{sec-FormalExpansions},
we provide a calculation to determine all the local terms in the formal expansion.
In Section \ref{Sec-MaximumPrinciple}, we estimate the difference of the
solution and its expansion involving all the local terms. The proof is based on
the maximum principle.
In Section \ref{Sec-TangentialSmooth},
we prove the tangential smoothness
of solutions near boundary by the maximum principle and scaled Schauder estimates.
In Section \ref{sec-N-Regularityies}, we study the regularity along the normal direction
by rewriting the constant mean curvature equation as an ordinary differential equation along the
normal direction.

\section{Formal Expansions}\label{sec-FormalExpansions}

In this section, we derive expansions for solutions of
\eqref{eq-Intro-Equation} and \eqref{eq-Intro-BoundaryValue}. We denote by $x=(x', x_n)$ points in $\mathbb R^n$
and set
\begin{align}\label{FM1}
Q(u)=\Delta u - \frac{u_i u_j}{1+|D u|^2}u_{ij}-\frac{n}{x_n}\big(u_n-H\sqrt{1+|Du|^2}\big).
\end{align}
In the following, we calculate the operator $Q$ on polynomials of $x_n$.
We set
\be\label{eq-Expression_u*}u_*=c_0+c_1x_n+c_2x_n^2+\cdots+c_{n}x_n^{n}
+c_{n+1,1}x_n^{n+1}\log x_n,\ee
where $c_0, c_1,\cdots, c_n$ and $c_{n+1,1}$ are functions of $x'$ to be determined, with
$$c_0=\varphi.$$

\begin{lemma}\label{lemma-FormalExpansion}
Assume $\varphi\in C^\ell(B_1')$ and $H\in C^{\ell-1}(\bar B_1^+)$, with
$|H|<1$ in $\bar B_1^+$, for some $\ell\ge n+3$. Then,
there exist $c_i\in C^{\ell-i}(B_1')$, for $i=1, 2, \cdots, n$, and
$c_{n+1,1}\in C^{\ell-n-1}(B_1')$ such that, for $u_*$
defined in \eqref{eq-Expression_u*},
$$|Q(u_*)|\le Cx_n^{n}\log x_n^{-1},$$
where $C$ is a positive constant depending only on $n$, the
$C^{n+3}$-norm of $\varphi$ in $B_1'$ and the $C^{n+2}$-norm of $H$ in $B_1^+$.
\end{lemma}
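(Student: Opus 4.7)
\medskip

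\noindent\textbf{Proof plan.} The strategy is to substitute the ansatz $u_*$ directly into $Q$ and expand the result as a formal series in $x_n$ (with $\log x_n$ factors allowed). The unknowns $c_1,\ldots,c_n$ and $c_{n+1,1}$ are then chosen so that all terms of order $x_n^{-1},\,x_n^0,\,x_n^1,\ldots,x_n^{n-1}$ (including logarithmic variants) in $Q(u_*)$ vanish, which will leave a remainder bounded by $Cx_n^n\log x_n^{-1}$. The most singular contribution to $Q$ comes from $-\frac{n}{x_n}(u_n-H\sqrt{1+|Du|^2})$, whose $x_n^{-1}$-piece at $x_n=0$ vanishes precisely when $c_1=H_0\sqrt{1+|D\varphi|^2+c_1^2}$; solving this algebraic relation yields the explicit formula for $c_1$ stated in the introduction and, as a by-product, the identity $u_n^2/(1+|Du|^2)\big|_{x_n=0}=H_0^2$, which will be used repeatedly below.

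\medskip

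\noindent Next I would determine $c_2,\ldots,c_n$ by induction on $k$. A term $c_{k+1}x_n^{k+1}$ contributes
$\bigl[k(k+1)-H_0^2\,k(k+1)-n(k+1)+nH_0^2(k+1)\bigr]c_{k+1}x_n^{k-1}$
to $Q(u_*)$ at order $x_n^{k-1}$ through $\Delta u$, the quasilinear second-order term, and the expansion of $u_n-H\sqrt{1+|Du|^2}$. Combining the $\Delta$ and nonlinear pieces gives $(k+1)k(1-H_0^2)$, while the $H\sqrt{1+|Du|^2}$ contribution, computed via a one-step Taylor expansion of the square root about the polynomial part and the $c_1$-identity, adds $-n(k+1)(1-H_0^2)$, producing the net coefficient $(k+1)(1-H_0^2)(k-n)$. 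For $1\le k\le n-1$ this is strictly negative (using $|H|<1$), so the order-$x_n^{k-1}$ equation is uniquely solvable for $c_{k+1}$ as an explicit rational expression in $\varphi$, $H$, and $c_1,\ldots,c_k$ together with their tangential derivatives.

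\medskip

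\noindent At the critical order $x_n^{n-1}$ the same coefficient reads $(n+1)(1-H_0^2)(n-n)=0$, so a polynomial term $c_{n+1}x_n^{n+1}$ cannot absorb the residual. This is exactly where the logarithmic correction enters. A direct computation using
$\partial_n^2(c_{n+1,1}x_n^{n+1}\log x_n)=c_{n+1,1}\bigl[n(n+1)\log x_n+(2n+1)\bigr]x_n^{n-1}$
and the corresponding expansion of $\sqrt{1+|Du|^2}$ shows that the coefficient of $x_n^{n-1}\log x_n$ in $Q(u_*)$ collects into $n(n+1)(1-H_0^2-(1-H_0^2))\,c_{n+1,1}\equiv 0$; the three contributions cancel by virtue of the same $c_1$-identity. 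What survives is a coefficient of $x_n^{n-1}$ of the form $(2n+1)(1-H_0^2)\,c_{n+1,1}-n\,c_{n+1,1}+\text{(}H\text{-correction)}$, which up to sign is $(n+1-(2n+1)H_0^2+\cdots)c_{n+1,1}$; one checks, again using the $c_1$-identity, that this simplifies to a strictly nonvanishing multiple of $c_{n+1,1}$ (of the form $(n+1)(1-H_0^2)$ with a further $n$-correction from $u_n/x_n$). Setting this equal to minus the residual generated by the polynomial part uniquely determines $c_{n+1,1}$ in terms of $\varphi$, $H$, and their derivatives up to order $n+1$.

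\medskip

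\noindent The regularity claim $c_i\in C^{\ell-i}$ and $c_{n+1,1}\in C^{\ell-n-1}$ follows by tracing through the recursion, since each equation expresses the next coefficient as a smooth rational function (with nonvanishing denominator on account of $|H|<1$) of previously determined coefficients and tangential derivatives, and the $C^\ell$- and $C^{\ell-1}$-hypotheses on $\varphi$ and $H$ lose one order of regularity per step. Once $c_1,\ldots,c_n,c_{n+1,1}$ have been fixed so that every coefficient of $x_n^{k}$ and $x_n^{k}\log x_n$ for $-1\le k\le n-1$ vanishes in $Q(u_*)$, the remaining terms are of order at most $x_n^n\log x_n^{-1}$, giving the asserted bound. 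The main technical obstacle in this proof is the pair of cancellations at the critical order $x_n^{n-1}$: first, that the $x_n^{n-1}\log x_n$ coefficient vanishes identically (so a single scalar function $c_{n+1,1}$ suffices rather than two), and second, that the $x_n^{n-1}$ coefficient of $c_{n+1,1}$ is nonzero; both reduce to bookkeeping organized around the single identity $u_n^2/(1+|Du|^2)\big|_{x_n=0}=H_0^2$ produced in Step~1.
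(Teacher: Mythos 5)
Your proposal is correct and follows essentially the same approach as the paper: substitute the ansatz into $Q$, expand in powers of $x_n$, and solve successively for $c_1,\ldots,c_n,c_{n+1,1}$ by killing the coefficients of $x_n^{-1},\ldots,x_n^{n-1}$, with the logarithmic term required because the indicial coefficient $(k+1)(1-H_0^2)(k-n)$ vanishes exactly at $k=n$. The paper is much terser, simply recording $c_i=\frac{1}{i(n+1-i)}F_i$ and asserting the vanishing of the $x_n^{n-1}\log x_n$ coefficient, whereas you work out the indicial computations and the cancellation explicitly; these details are consistent with the paper's statements.
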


\begin{proof}
Set
$$H=H_{0}+H_{1}x_{n}
+\cdots+H_{n}x_{n}^{n}+H_{n+1}x_n^{n+1}+\cdots,$$
where $H_0, H_1,\cdots, H_{n}, H_{n+1}$  are functions of $x'$.
We now substitute $u_*$ given by \eqref{eq-Expression_u*} and $H$ above in $Q$
and arrange $Q(u_*)$ in an ascending order of $x_n$. By requiring the coefficient
of $x_n^{-1}$ to be zero in $Q(u_*)$, we have
\begin{equation}\label{eq-Expression_c_1}
c_{1}=H_{0}\sqrt{\frac{1+|D_{x'}\varphi|^2}{1-H_{0}^{2}}}.\end{equation}
Then,
$$1+|D_{x'}c_0|^2+c_1^2=\frac{1+|D_{x'}\varphi|^2}{1-H_0^2}.$$
For $i=2, \cdots, n$, by requiring the coefficient
of $x_n^{i-2}$ to be zero in $Q(u_*)$ successively, we have
\begin{equation}\label{eq-Expression_c_i}c_{i}=\frac{1}{i(n+1-i)}
F_{i}(c_{0},\cdots,c_{i-1},H_{0},\cdots, H_{i-1}),\end{equation}
where $F_i$ is a smooth function in $\varphi, c_1, \cdots, c_{i-1}$,
$H_{0}, \cdots, H_{i-1}$ and their derivatives up to order 2.
For example,
$$\aligned c_{2}&=\frac{1}{2(n-1)(1-H_{0}^{2})}\bigg[ \Delta_{x'}c_{0}-
\sum_{\alpha,\beta=1}^{n-1}
\frac{\partial_{\alpha}c_{0}\partial_{\beta}c_{0}\partial_{\alpha\beta}c_{0}}
{1+|D_{x'}c_0|^2+c_1^2}\\
&\qquad +\frac{(n-2)H_0D_{x'}c_0\cdot D_{x'}c_1}
{1+|D_{x'}c_0|^2+c_1^2}+nH_{1}\sqrt{1+|D_{x'}c_{0}|^{2}}\bigg].
\endaligned$$
We point out that the coefficient of $x_{n}^{n-1}\log x_{n}$ in $Q(u_*)$ equals 0.
By requiring the coefficient
of $x_n^{n-1}$ to be zero in $Q(u_*)$, we have
\begin{equation}\label{eq-Expression_c_{l+1,1}}
c_{n+1,1}=\frac{1}{n+1}F_{n+1,1}(c_{0},\cdots,c_{n},H_{0},\cdot\cdot\cdot,H_{n}),
\end{equation}
where $F_{n+1,1}$ is a smooth function in
$c_{0},\cdots,c_{n},H_{0},\cdots,H_{n}$ and their derivatives up to order 2.
The next term in $Q(u_*)$ is $x_n^n\log x_n$.  \end{proof}

The functions $c_1, \cdots, c_n$ and $c_{n+1, 1}$ defined in
\eqref{eq-Expression_c_1}, \eqref{eq-Expression_c_i}
and \eqref{eq-Expression_c_{l+1,1}} are functions of $x'\in B_1'$.
We will refer to the corresponding terms as {\it local terms}.
Next, we calculate $c_{3,1}$ for $n=2$ and constant $H$.

\begin{prop}\label{Prop-LogCoefficient}
For $n=2$, if $H$ is constant with $|H|<1$, then $c_{3,1}$ in \eqref{eq-Expression_c_{l+1,1}} is zero.
\end{prop}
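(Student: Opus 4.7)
The plan is to determine $c_{3,1}$ explicitly by running the procedure from the proof of Lemma \ref{lemma-FormalExpansion} specialized to $n=2$ with constant $H$, and to verify that the resulting formula evaluates to zero. Recall that $c_{n+1,1}$ is pinned down by requiring the coefficient of $x_n^{n-1}$ in $Q(u_*)$ to vanish; for $n=2$ this is the coefficient of $x_2$, since the coefficient of $x_2\log x_2$ already vanishes automatically as noted in the proof of Lemma \ref{lemma-FormalExpansion}. A quick check shows that the term $c_{3,1}x_2^3\log x_2$ contributes $5c_{3,1}$ to the coefficient of $x_2$ in $(u_*)_{22}$ and $-2c_{3,1}$ to the coefficient of $x_2$ in $-\tfrac{2}{x_2}(u_*)_2$, so the equation defining $c_{3,1}$ takes the form $3c_{3,1}+R=0$, where $R$ collects all remaining, $c_{3,1}$-free contributions. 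The task reduces to proving $R\equiv 0$.

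The simplifications available for $n=2$ and constant $H$ are decisive. All $H_k$ with $k\ge 1$ vanish, removing many terms at once. From \eqref{eq-Expression_c_1}, $c_1 = H\sqrt{(1+\varphi'^2)/(1-H^2)}$, which yields the identity $1+\varphi'^2+c_1^2 = (1+\varphi'^2)/(1-H^2)$. Moreover, the factor $(n-2)H_0$ in the formula for $c_2$ from Lemma \ref{lemma-FormalExpansion} vanishes, leaving the closed form
$$c_2 \;=\; \frac{\varphi''(1+H^2\varphi'^2)}{2(1-H^2)(1+\varphi'^2)}.$$
Introducing $w := \sqrt{(1+\varphi'^2)/(1-H^2)}$, so that $c_1 = Hw$, $1+\varphi'^2+c_1^2 = w^2$, and $ww' = \varphi'\varphi''/(1-H^2)$, makes subsequent differentiations of $c_1$ and $c_2$ tractable and places every intermediate quantity into a common algebraic form.

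Next I would Taylor-expand $u_*$, $Du_*$, $D^2u_*$, $|Du_*|^2$, and $\sqrt{1+|Du_*|^2}$ in $x_2$ to sufficient order, substitute into $Q$, and collect the coefficient of $x_2$. This splits $R$ into four natural pieces: from $\Delta u_*$ (contributing $c_1''$), from the quasilinear correction $u_iu_j u_{ij}/(1+|Du|^2)$, from $-\tfrac{2}{x_2}(u_*)_2$ (via the $c_2$-term), and from $\tfrac{2H}{x_2}\sqrt{1+|Du_*|^2}$ (via the coefficient of $x_2^2$ in the square root, obtained by expanding $\sqrt{w^2+Bx_2+Cx_2^2+\cdots} = w + \tfrac{B}{2w}x_2 + (\tfrac{C}{2w}-\tfrac{B^2}{8w^3})x_2^2 + \cdots$). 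Each piece is a rational function in $\varphi'$, $\varphi''$, $\varphi'''$ with denominator a power of $w$.

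The heart of the proof, and its main obstacle, is the algebraic verification that these four pieces sum to zero. The cancellation is not apparent term by term and uses the closed forms of $c_1$ and $c_2$ crucially. Two consistency checks help structure the bookkeeping: the limit $H\to 0$ must recover the well-known absence of any log term for minimal surfaces with $n=2$ (only even powers of $x_n$ appear up to order $n$, as noted in the Introduction); and the coefficient of the highest derivative $\varphi'''$ appearing in $R$ must cancel among the four pieces. Once $R$ is collected into a single rational fraction with denominator a positive power of $(1+\varphi'^2)(1-H^2)$, verifying its vanishing reduces to a polynomial identity in $\varphi',\varphi'',\varphi''',H$ which can be checked directly.
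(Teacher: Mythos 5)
Your plan is essentially the same as the paper's: expand $Q(u_*)$ for $n=2$ with constant $H$, isolate the coefficient of $x_2$ (which pins down $c_{3,1}$), and verify that the resulting expression vanishes after substituting the explicit formulas for $c_1, c_2$ and their derivatives in terms of $\varphi', \varphi'', \varphi'''$. The paper does exactly this, writing out the closed-form expression for $c_{3,1}$ as a rational function of $c_0,c_1,c_2$ and their derivatives and then plugging in; you skip the intermediate formula but propose the same bookkeeping. One genuine slip in your setup: the linear-in-$c_{3,1}$ contribution to the $x_2$-coefficient of $Q(u_*)$ does not come only from $(u_*)_{22}$ and $-\tfrac{2}{x_2}(u_*)_2$. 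The quasilinear piece $-\tfrac{(u_*)_2^2}{1+|Du_*|^2}(u_*)_{22}$ contributes $-5H^2c_{3,1}$ (since $\tfrac{(u_*)_2^2}{1+|Du_*|^2}\to H^2$ as $x_2\to 0$), and $\tfrac{2H}{x_2}\sqrt{1+|Du_*|^2}$ contributes $+2H^2c_{3,1}$ via the $2c_1c_{3,1}x_2^2$ term in $(u_*)_2^2$. So the defining equation is $3(1-H^2)c_{3,1}+R=0$, not $3c_{3,1}+R=0$; this matches the prefactor $-\tfrac{1}{3(1-H^2)}$ in the paper's formula for $c_{3,1}$. Since $|H|<1$, the conclusion $c_{3,1}=0\iff R=0$ is unchanged, but as written your $R$ is not yet $c_{3,1}$-free. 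Finally, note that both your proposal and the paper's proof defer the decisive algebraic cancellation to a ``lengthy calculation''; to have a complete proof you would need to actually carry it out (or at least organize it, as the paper does, by collecting the $\varphi'''$ terms and then the $\varphi''$ terms).
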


\begin{proof} For $n=2$, the operator $Q$ is given by
$$Q(u)=\Delta u - \frac{u_i u_j}{1+|D u|^2}u_{ij}-\frac{2}{x_2}\big(u_2-H\sqrt{1+|Du|^2}\big).$$
We write $x=(x_1, x_2)$ and denote by $w'$ the derivative of $w=w(x_1)$ with respect to $x_1$.
Set
$$u_*=c_0+c_1x_2+c_2x_2^2+c_{3,1}x_2^3\log x_2.$$
We need to calculate $F_{3,1}$ in the proof of Lemma
\ref{lemma-FormalExpansion}. In fact, by a calculation as in the proof of
Lemma \ref{lemma-FormalExpansion}, we have
\begin{align*}
c_{0}&=\varphi,\\
c_{1}&=\frac{H}{\sqrt{1-H^2}}\sqrt{1+\varphi'^{2}},\\
c_{2}&=\frac{(1+\varphi'^{2}H^{2})}{2(1-H^{2})(1+\varphi'^{2})}\varphi'',
\end{align*}
and
\begin{align*}
\begin{split}
c_{3,1}&=-\frac{1}{3(1-H^{2})}
\bigg[c''_{1}\\
&\qquad-\frac{1}{1+c_{1}^{2}+c_{0}'^{2}}
(c''_{1}c_{0}'^{2} +2c''_{0}c'_{1}c'_{0}+4c_{1}c'_{0}c'_{2}
+2c_{1}c_{1}'^2+4c_{2}c'_{0}c'_{1}+8c_{1}c_{2}^{2})
\\
&\qquad+\frac{1}{(1+c_{1}^{2}+c_{0}'^{2})^2}(4c_{2}c_{1}+2c'_{1}c'_{0})
(c''_{0}c_{0}'^{2}+2c_{1}c'_{0}c'_{1}
+2c_{1}^{2}c_{2})\\
&\qquad+H\bigg(
\frac{4c_{2}^{2}+2c'_{0}c'_{2}+c_{1}'^{2}}{(1+c_{1}^{2}+c_{0}'^{2})^{\frac{1}{2}}}
-\frac{(2c_{1}c_{2}+c'_{0}c'_{1})^{2}}{(1+c_{1}^{2}+c_{0}'^{2})^{\frac{3}{2}}}\bigg)\bigg].
\end{split}
\end{align*}
By the expression of $c_0, c_1$ and $c_2$, we have
\begin{align*}
c'_{1}&=\frac{H}{\sqrt{1-H^{2}}}\frac{\varphi'\varphi''}{\sqrt{1+\varphi'^{2}}},\\
c''_{1}&=\frac{H}{\sqrt{1-H^{2}}}\bigg[\frac{\varphi''^{2}}{(\sqrt{1+\varphi'^{2}})^{3}}
+\frac{\varphi'\varphi'''}{\sqrt{1+\varphi'^{2}}}\bigg],\\
c'_{2}&=\frac{1+\varphi'^{2}H^{2}}{2(1-H^{2})(1+\varphi'^{2})}\varphi'''-\frac{\varphi'\varphi''^2}{(1+\varphi'^{2})^{2}}.
\end{align*}
We substitute these expression in $c_{3,1}$ and collect terms involving $\varphi'''$ first and then $\varphi''$.
We obtain $c_{3,1}=0$ after a lengthy calculation. \end{proof}


\section{Estimates of Local Terms}\label{Sec-MaximumPrinciple}

In this section, we derive an estimate for an expansion involving all local
terms by the
maximum principle.
For convenience, we write  $t=x_n$  and denote by  $x=(x',t)$ points in $\mathbb R^n$. Set
$Q$ as in \eqref{FM1}, i.e.,
$$Q(u)=\Delta u - \frac{u_i u_j}{1+|D u|^2}u_{ij}-\frac{n}{t}\big(u_n-H\sqrt{1+|Du|^2}\big).$$
Throughout the paper, we always assume
$$|H|<1\quad\text{in }\bar B_1^+.$$

First, we derive a decay estimate by  the maximum principle.

\begin{lemma}\label{lemma-order one}
Assume $\varphi\in C^1(B_1')$ and $H\in C(\bar B_1^+)$ with $|H|<1$ in $\bar B_1^+$.  Let
$ u \in C(\bar{B}_{1}^{+})\bigcap C^{2}(B_{1}^{+})$ be a solution of
\eqref{eq-Intro-Equation}-\eqref{eq-Intro-BoundaryValue}.
Then, for any $(x',t)\in B_{1/4}'\times(0,1/4),$
\begin{align}\label{real1}|u-\varphi|\le Ct,\end{align}
where $C$ is a positive constant
depending only on n, $|\varphi|_{C^{1}(B_{{3}/{4}}')},$
$|u|_{L^{\infty}(B_{1}^{+})}$ and
$|H|_{L^{\infty}(B_{1}^{+})}$.
\end{lemma}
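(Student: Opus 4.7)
The plan is to establish \eqref{real1} via the comparison principle with explicit upper and lower barriers centered at an arbitrary boundary point. Fix $x_0' \in B'_{1/4}$; by varying $x_0'$ it suffices to prove $|u(x_0', t) - \varphi(x_0')| \le C t$. Let $\Omega = \{(x', t) : |x' - x_0'|^2 + t^2 < 1/4,\ t > 0\} \subset B_1^+$. I will use the smoothed-cone barriers
\[
w_\pm(x', t) = \varphi(x_0') \pm (A R + B t), \qquad R := \sqrt{|x' - x_0'|^2 + t^2},
\]
with $A$ and $B$ to be chosen depending only on $n$, $|\varphi|_{C^1(B'_{3/4})}$, $\|u\|_{L^\infty(B_1^+)}$, and $|H|_{L^\infty(B_1^+)}$.

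A direct computation gives $\Delta w_+ = A(n-1)/R$, and the quadratic term $w_{+,i} w_{+,j} w_{+,ij}/(1 + |Dw_+|^2)$ is likewise bounded by $C(A,B)/R$; since $R \ge t$ in $\Omega$, both contributions are at most $C(A,B)/t$. The remaining term in $Q(w_+)$ is the singular one,
\[
-\frac{n}{t}\Bigl(\frac{A t}{R} + B - H \sqrt{1 + |D w_+|^2}\Bigr),
\]
in which $|D w_+|^2$ is bounded in terms of $A$ and $B$. Since $|H|<1$ uniformly in $\bar B_1^+$, I can choose $B$ large enough that $B - H \sqrt{1 + |Dw_+|^2} \ge \delta$ for some $\delta > 0$ throughout $\Omega$; the resulting $-n\delta/t$ absorbs the $+C(A,B)/t$ terms, giving $Q(w_+) \le 0$ in $\Omega$. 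On the flat part $\{t=0\}$ of $\partial \Omega$, $w_+ = \varphi(x_0') + A|x' - x_0'| \ge \varphi(x')$ provided $A \ge |\varphi|_{C^1(B'_{3/4})}$; on the curved part $R = 1/2$, further enlarging $A$ in terms of $\|u\|_\infty$ and $|\varphi|_\infty$ ensures $w_+ \ge u$ there.

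The comparison principle for the quasilinear operator $Q$---whose diffusion matrix $\delta_{ij} - p_i p_j/(1+|p|^2)$ is positive definite with $|Dw_+|$ bounded---then yields $u \le w_+$ in $\Omega$, and evaluating at $x' = x_0'$ gives $u(x_0', t) - \varphi(x_0') \le (A+B) t$. The symmetric argument with $w_-$ gives the matching lower bound. The main technical obstacle is the apparent singularity of $\Delta w_+$: a pure cone $A|x'-x_0'|$ would produce $\Delta \sim 1/|x'-x_0'|$, which is uncontrollable on the axis $x' = x_0'$. The smoothing $A\sqrt{|x'-x_0'|^2 + t^2}$ resolves this because $R \ge t$, so the blow-up is of order $1/t$ and is absorbed by the structurally favorable singular term whose sign is governed by the assumption $|H| < 1$. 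A standard perturbation $w_+ + \eta$, with $\eta \to 0$ at the end, supplies any strict inequality needed to invoke the comparison argument.
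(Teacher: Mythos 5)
Your proposal is correct and takes essentially the same approach as the paper: both use barriers of the form $\varphi(x_0')\pm\big(\text{coefficient}\cdot t + \text{coefficient}\cdot\sqrt{|x'-x_0'|^2+t^2}\,\big)$, apply the maximum principle, and exploit the uniform bound $|H|<1$ to make the $1/t$ singular term in $Q$ dominate negatively once the coefficient of $t$ is taken large enough relative to the coefficient of $R$. The only cosmetic differences are that the paper works on the cylinder $B'_{1/4}(x_0')\times(0,1/4)$ rather than a half-ball, labels the two constants in the opposite order (the paper's $A$ multiplies $t$, yours $B$ does), and estimates the singular term via the identity $\sqrt{1+|D\overline w|^2}-\overline w_n=(1+|D_{x'}\overline w|^2)/(\sqrt{1+|D\overline w|^2}+\overline w_n)$ rather than directly bounding $\sqrt{1+|Dw_+|^2}$ from above.
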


\begin{proof}
Take any $x_{0}'\in B_{1/4}'$ and set $r_0= 1/4$ and $\delta_{0}= 1/4.$ Then,
${B_{r_0}'(x_{0}')}\times(0,\delta_{0})\subset B_{3/4}^+.$
Set
$$\overline{w}(x',t)=\varphi(x_0')+At+B(|x'-x_{0}'|^{2}+t^2)^{\frac{1}{2}},$$
where $A$ and $B$ are positive constants to be determined.

We now consider $u$ on  $\partial( B_{r_0}'(x_{0}')\times(0,\delta_{0}))$.  First,
$$u(x',0)=\varphi(x')\leq\varphi(x_0')+B|x'-x_0'|=\overline{w}(x',0)\quad\text{on }B_{r_0}'(x_{0}'),$$
if $B\geq |\varphi|_{C^{1}(B_{{1}/{2}}')}.$ Next
$$u(x',t)\leq\varphi(x_0')+Br_0\leq\overline{w}(x',t)\quad\text{on }\partial B_{r_0}'(x_{0}')\times(0,\delta_{0}),$$
if $$B\geq r_0^{-1}\big(|u|_{L^{\infty}(B_{1}^{+})} + |\varphi|_{L^{\infty}(B_{{3}/{4}}')}\big).$$
Last,
$$u(x',\delta_{0})\leq\varphi(x_0')+A\delta_0\leq\overline{w}(x',\delta_{0}),$$
if $$A\geq \delta_0^{-1}\big(|u|_{L^{\infty}(B_{1}^{+})} + |\varphi|_{L^{\infty}(B_{{3}/{4}}')}\big).$$
Therefore, we set
\begin{align*}
B&=\max\{ |\varphi|_{C^{1}(B_{{1}/{2}}')},
r_0^{-1}\big(|u|_{L^{\infty}(B_{1}^{+})} + |\varphi|_{L^{\infty}(B_{{3}/{4}}')}\big)\},\\
A&=\widetilde{A}\max\{B, \delta_0^{-1}\big(|u|_{L^{\infty}(B_{1}^{+})} + |\varphi|_{L^{\infty}(B_{{3}/{4}}')}\big)\},
\end{align*}
where $\widetilde{A}\geq 1$ is a  constant to be determined.
Therefore, $$u\leq\overline{w}\quad\text{on }\partial (B_{r_0}'(x_{0}')\times(0,\delta_{0})).$$

Next, we calculate $tQ(\overline{w})$.
A straightforward calculation yields
\begin{align*}
\overline{ w}_n&=A+\frac{Bt}{(|x'-x_0'|^{2}+t^2)^{\frac{1}{2}}},\\
\overline{ w}_a&=\frac{B(x_a-x_{0a})}{(|x'-x_0'|^{2}+t^2)^{\frac{1}{2}}},\end{align*}
and
\begin{align*}
t\overline{ w}_{nn}&=\frac{Bt}{(|x'-x_0'|^{2}+t^2)^{\frac{1}{2}}}-\frac{Bt^3}{(|x'-x_0'|^{2}+t^2)^{\frac{3}{2}}},\\
t\overline{ w}_{na}&=-\frac{Bt^2(x_a-x_{0a})}{(|x'-x_0'|^{2}+t^2)^{\frac{3}{2}}},\\
t\overline{ w}_{ab}&=\frac{B\delta_{ab}t}{(|x'-x_0'|^{2}+t^2)^{\frac{1}{2}}}
-\frac{Bt(x_a-x_{0a})(x_b-x_{0b})}{(|x'-x_0'|^{2}+t^2)^{\frac{3}{2}}}.
\end{align*}
Write
$$-n\big(\overline{w}_n-H\sqrt{1+|D\overline{w}|^2}\big)=
n\big(\sqrt{1+|D\overline{w}|^2}-\overline{w}_n\big)-n(1-H)\sqrt{1+|D\overline{w}|^2}.$$
Then,
$$tQ(\overline{w})\le t|D^2\overline{w}|
+n\big(\sqrt{1+|D\overline{w}|^2}-\overline{w}_n\big)-n(1-|H|)\sqrt{1+|D\overline{w}|^2}.$$
Note
$$1+|D\overline{w}|^2\ge \overline{w}_n^2\ge A^2,$$
and
$$\sqrt{1+|D\overline{w}|^2}-\overline{w}_n\le \sqrt{1+|D_{x'}\overline{w}|^2}\le \sqrt{1+B^2}.$$
Then,
\begin{align*}
tQ(\overline{w})&\leq n^2B+n\sqrt{1+B^2}-nA(1-|H|).\end{align*}
Since $\sup|H|<1,$ by taking $A$ large or $\widetilde{A}$ large, we have
$$tQ(\overline{w})\leq 0\quad\text{in }B_{r_{0}}'(x_{0}')\times(0,\delta_{0}).$$

By the maximum principle,  we obtain
$$u(x',t)\leq\overline{w}(x',t) \quad\text{in }B_{r_{0}}'(x_{0}')\times(0,\delta_{0}).$$
Evaluating at $x'=x'_{0},$ we get
$$u-\varphi\leq At+Bt \quad \text{in } B_{1/4}'\times(0,\delta_{0}).$$
For the lower bound of $u,$ we set
$$\underline{w}(x',x_{n})=\varphi(x_0')-At-B(|x'-x_0'|^{2}+t^2)^{\frac{1}{2}},$$
and proceed similarly.
\end{proof}

Next, we improve Lemma \ref{lemma-order one} under better assumptions of $\varphi$ and $H$.

\begin{lemma}\label{lemma-order 2}
Assume $\varphi\in C^{2}(B_{1}')$ and $H\in C^1(\bar B_1^+)$, with
$|H|<1$ in $\bar B_1^+$. Let
$ u \in C(\bar{B}_{1}^{+})\bigcap C^{2}(B_{1}^{+})$ be a solution of
\eqref{eq-Intro-Equation}-\eqref{eq-Intro-BoundaryValue}.
Then, for any $(x',t)\in B_{1/8}'\times(0,\delta_{0}),$
\begin{align}\label{real2}|u-\varphi-c_{1}(x')t|\leq Ct^{1+\varepsilon},\end{align}
where $c_1$ is given by \eqref{eq-Expression_c_1},
and $\delta_{0},$ $\varepsilon$ and $C$ are positive constants
depending only on n, $|\varphi|_{C^{2}(B_{{3}/{4}}')},$
$|u|_{L^{\infty}(B_{1}^{+})}$ and
$|H|_{C^{1}(B_{1}^{+})}$.
\end{lemma}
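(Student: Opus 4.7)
The plan is to extend the barrier argument of Lemma~\ref{lemma-order one} by replacing the crude $At$ correction with the correct linear piece $c_1(x')t$ defined in \eqref{eq-Expression_c_1} and adding a higher-order term of size $t^{1+\varepsilon}$ to absorb the remaining error. For a fixed $x_0'\in B_{1/8}'$, some $\varepsilon\in(0,1)$, and large parameters $A,B>0$ to be chosen, I propose the upper barrier
$$\overline{w}(x',t)=\varphi(x')+c_1(x')\,t+At^{1+\varepsilon}+Bt\,|x'-x_0'|^2$$
on the cylinder $B_{r_0}'(x_0')\times(0,\delta_0)$, together with the symmetric lower barrier obtained by flipping the signs of $A$ and $B$. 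The defining relation \eqref{eq-Expression_c_1} for $c_1$ makes the $O(1/t)$ term in $Q(\varphi+c_1 t)$ vanish, so $Q(\varphi+c_1 t)$ is bounded in terms of $|\varphi|_{C^2}$, $|H|_{C^1}$ and $\sup|H|<1$; the perturbation $\eta=At^{1+\varepsilon}+Bt|x'-x_0'|^2$ is chosen so that $D_{x'}\eta|_{t=0}=0$, which preserves this cancellation.

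I would then carry out three steps. First, the boundary comparison on $\partial(B_{r_0}'(x_0')\times(0,\delta_0))$: on $\{t=0\}$ the barrier equals $\varphi$; on the top and lateral faces, Lemma~\ref{lemma-order one} gives $u\le\varphi+C_1 t$, so $\overline{w}\ge u$ after enlarging $B\ge C/r_0^2$ and $A\ge C/\delta_0^\varepsilon$ (with $C$ depending on $C_1$ and $|c_1|_{L^\infty}$); here the $Bt|x'-x_0'|^2$ piece is what handles the lateral face, the $At^{1+\varepsilon}$ piece alone being too small for small $t$. Second, the interior inequality $tQ(\overline{w})\le 0$: writing $\overline{w}=u_*+\eta$ with $u_*=\varphi+c_1 t$, one has $|tQ(u_*)|\le M$ with $M$ independent of $A,B$. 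The dominant $A$-dependent contribution arises from $\overline{w}_{nn}=A(1+\varepsilon)\varepsilon t^{\varepsilon-1}$ and, combined with $-(n/t)\overline{w}_n$ and the first-order expansion of $H\sqrt{1+|D\overline{w}|^2}$, reduces at leading order to
$$A(1+\varepsilon)(\varepsilon-n)(1-H_0^2)\,t^{\varepsilon-1}+O(1),$$
which is strictly negative for $\varepsilon<n$ and $|H|<1$ thanks to the identity $1+|D_{x'}\varphi|^2+c_1^2=(1+|D_{x'}\varphi|^2)/(1-H_0^2)$. At points with $|x'-x_0'|>0$, the piece $-(n/t)B|x'-x_0'|^2$ from $\overline{w}_n$ gives a further large negative contribution. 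After fixing $\varepsilon$ small and enlarging $A,B$, one obtains $tQ(\overline{w})\le 0$ throughout the cylinder. Third, the maximum principle for the quasilinear operator $Q$ gives $u\le\overline{w}$; evaluating at $x'=x_0'$ then produces $u(x_0',t)-\varphi(x_0')-c_1(x_0')t\le At^{1+\varepsilon}$, and the lower bound follows symmetrically.

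The hard part will be choosing $\eta$ so that all three properties coexist under the limited regularity $\varphi\in C^2$, $H\in C^1$. The natural radial choice $\eta=A(|x'-x_0'|^2+t^2)^{(1+\varepsilon)/2}$ fails because its tangential gradient does not vanish at $t=0$ for $x'\ne x_0'$, producing an error of order $A|x'-x_0'|^\varepsilon/t$ in $Q(\overline{w})$ that is unbounded near the boundary; the split form $At^{1+\varepsilon}+Bt|x'-x_0'|^2$ is the minimal modification that both suppresses this error and dominates the lateral face. A secondary subtlety is that $Q(u_*)$ formally involves $\Delta_{x'}c_1$, which presumes $c_1\in C^2$, one derivative above the stated hypothesis; this can be handled by approximating $\varphi$ and $H$ by smooth functions and passing to the limit, since all constants depend only on $|\varphi|_{C^2}$, $|H|_{C^1}$, and $\sup|H|<1$.
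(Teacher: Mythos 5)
Your overall strategy — a barrier built from the linear piece in $t$ plus a $t^{1+\varepsilon}$ correction plus a term to handle the lateral face, followed by the maximum principle — is the same as the paper's. But there is one genuine gap, and it is precisely the issue you flag as a ``secondary subtlety'' and then wave away: your barrier uses $c_1(x')\,t$, so $Q(\overline{w})$ contains $\big[\delta_{ab}-\frac{(u_*)_a(u_*)_b}{1+|Du_*|^2}\big]\partial_{ab}c_1\cdot t$, which involves $D^2_{x'}c_1\sim D^3_{x'}\varphi$ and $D^2_{x'}H$. This does \emph{not} cancel, so your claim that $|Q(\varphi+c_1 t)|$ is ``bounded in terms of $|\varphi|_{C^2}$, $|H|_{C^1}$'' is false; the bound depends on $|\varphi|_{C^3}$ and $|H|_{C^2}$. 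The smoothing argument cannot rescue this: if you replace $\varphi,H$ by mollifications $\varphi_k,H_k$, the resulting constants $C_k$ depend on $|\varphi_k|_{C^3},|H_k|_{C^2}$, which are not bounded along the approximating sequence, so you cannot pass to the limit. The constant you produce genuinely depends on one more derivative than the lemma allows.

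The paper avoids this by \emph{freezing} $c_1$ at the base point: the barrier is
$\overline{w}=\varphi(x')+c_1(x_0')t+At^{1+\varepsilon}+B(|x'-x_0'|^2+t^2)^{\varepsilon/2}t$.
No derivative of $c_1$ ever appears, so the constants really are controlled by $|\varphi|_{C^2}$ and $|H|_{C^1}$. The price is that the $O(1/t)$ cancellation is exact only at $x'=x_0'$; at other points it leaves a Lipschitz error $O(|x'-x_0'|)$ in $tQ$. The paper's correction $B(|x'-x_0'|^2+t^2)^{\varepsilon/2}t$ is chosen precisely so that $\partial_n\eta\geq B|x'-x_0'|^\varepsilon$, which, via the $-n(1-|H|)\,\partial_n\eta$ piece, dominates $C|x'-x_0'|$ uniformly for small $|x'-x_0'|$. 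Your alternative $Bt|x'-x_0'|^2$ gives $\partial_n\eta=B|x'-x_0'|^2$, which cannot absorb a linear-in-$|x'-x_0'|$ error — not an issue in your version (since you did not freeze $c_1$), but it means you cannot simply swap in $c_1(x_0')$ while keeping your lateral term. To close the gap you should adopt both of the paper's devices together: freeze $c_1$ at $x_0'$ \emph{and} use the $B(|x'-x_0'|^2+t^2)^{\varepsilon/2}t$ correction.
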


\begin{proof}
Take any $x_{0}'\in B_{1/8}'$ and set
$r_0= 1/8$ and $\delta_{0}$ small to be determined. Consider in  $B_{r_0}'(x_{0}')\times(0,\delta_{0}).$
Set
$$\overline{w}(x',t)=\varphi(x')+c_{1}(x_0')t+At^{1+\varepsilon}+B(|x'-x_{0}'|^{2}+t^2)^{\frac{\varepsilon}{2}}t,$$
where $A$ and $B$ are positive constants to be determined.
Obviously, $u(x',0)=\overline{w}(x',0).$
In order to have $u\leq\overline{w}$ on $\partial( B_{r_0}'(x_{0})\times(0,\delta_{0})),$
we require,  by Lemma \ref{lemma-order one},
\begin{align}\label{1}
C_{0}+|c_{1}|_{L^{\infty}(B_{1/2}')}\leq Br_0^{\varepsilon},
\end{align}
and
\begin{align}\label{2}
C_{0}+|c_{1}|_{L^{\infty}(B_{1/2}')}\leq A\delta_{0}^{\varepsilon},
\end{align}
where $C_{0}$ is the constant in Lemma \ref{lemma-order one}.

A straightforward calculation yields, for $a,b\neq n,$
\begin{align*}
\overline{w}_{a}&=\partial_{a}\varphi+B\varepsilon\frac{(x_{a}-x_{0a})t}{(|x'-x_{0}'|^{2}+t^2)^{1-\frac{\varepsilon}{2}}},\\
\overline{w}_{n}&=c_{1}(x_0')+(1+\varepsilon)At^{\varepsilon}
+B(|x'-x_{0}'|^{2}+t^2)^{\frac{\varepsilon}{2}}+B\varepsilon\frac{t^2}{(|x'-x_{0}'|^{2}+t^2)^{1-\frac{\varepsilon}{2}}},
\end{align*}
and
\begin{align*}
t\overline{w}_{ab}&=\partial_{ab}\varphi t+B\varepsilon\frac{\delta_{ab}t^2}{(|x'-x_{0}'|^{2}+t^2)^{1-\frac{\varepsilon}{2}}}+
B\varepsilon(-2+\varepsilon)\frac{(x_{b}-x_{0b})(x_{a}-x_{0a})t^2}{(|x'-x_{0}'|^{2}+t^2)^{2-\frac{\varepsilon}{2}}},\\
t\overline{w}_{na}&=B\varepsilon\frac{(x_{a}-x_{0a})t}{(|x'-x_{0}'|^{2}+t^2)^{1-\frac{\varepsilon}{2}}}
+B\varepsilon(-2+\varepsilon)\frac{(x_{a}-x_{0a})t^3}{(|x'-x_{0}'|^{2}+t^2)^{2-\frac{\varepsilon}{2}}},\\
t\overline{w}_{nn}&
=\varepsilon(1+\varepsilon)At^{\varepsilon}+B\varepsilon\frac{3t^2}{(|x'-x_{0}'|^{2}+t^2)^{1-\frac{\varepsilon}{2}}}
+B\varepsilon(-2+\varepsilon)\frac{t^4}{(|x'-x_{0}'|^{2}+t^2)^{2-\frac{\varepsilon}{2}}}.
\end{align*}
In analyzing $Q(\overline{w})$, we first consider the terms involving the second derivatives of $\overline{w}$.
We note that $\overline{w}_a$, $\overline{w}_{ab}$ and
$\overline{w}_{an}$ do not involve $A$, and
$$\frac{|\partial_i\overline{w}\partial_j\overline{w}|}{1+|D\overline{w}|^2}\le \frac12.$$
By considering $\partial_{nn}\overline{w}$ separately, we have
\begin{align}\label{eq-Estimate01}
t\bigg[\Delta{\overline{w}}-\frac{\overline{w}_{i}\overline{w}_{j}\overline{w}_{ij}}{1+| D\overline{w}|^{2}}\bigg]
\leq Ct+CB\varepsilon t^{\varepsilon}
+\varepsilon(1+\varepsilon)At^{\varepsilon},
\end{align}
where $C$ is a positive constant depending only on $n$, 
$|\varphi|_{C^{2}}$. 

Next, we claim, by choosing $A$ and $\delta_{0}$ appropriately,
\begin{align}\label{eq-Estimate02}
\begin{split}
&-\overline{w}_{n}
+H\sqrt{1+|D\overline{w}|^{2}}\\
&\qquad\leq -(1-|H|)\bigg[(1+\varepsilon) At^{\varepsilon}+B(|x'-x_{0}'|^{2}+t^2)^{\frac{\varepsilon}{2}}
+B\varepsilon\frac{t^2}{(|x'-x_{0}'|^{2}+t^2)^{1-\frac{\varepsilon}{2}}}\bigg]
\\&\qquad\qquad+Ct+CB\varepsilon t^{\varepsilon}+C|x'-x_0'|,
\end{split}
\end{align}
where $C$ is a positive constant depending only on $n$, $|u|_{L^{\infty}}$,
$|\varphi|_{C^{2}}$ and $|H|_{C^{1}}.$
To prove this, we consider two cases $H>0$ and $H<0$ separately.
Set
$$D=(1+\varepsilon) At^{\varepsilon}+B(|x'-x_{0}'|^{2}+t^2)^{\frac{\varepsilon}{2}}
+B\varepsilon\frac{t^2}{(|x'-x_{0}'|^{2}+t^2)^{1-\frac{\varepsilon}{2}}}.$$
Then,
$$\overline{w}_{n}=c_{1}(x_0')+D,$$
and we need to prove
\begin{equation}\label{eq-Estimate1}H\sqrt{1+|D\overline{w}|^{2}}\le c_1(x_0')+|H|D
+Ct+CB\varepsilon t^{\varepsilon}+C|x'-x_0'|.\end{equation}

{\it Case 1.} We assume $H(x',t)\geq0.$ Note
\begin{align*}
1+|D\overline{w}|^{2}&\leq 1+
|D_{x'}\varphi|^{2}+c_{1}^2(x_0')+D^{2}+2Dc_{1}(x_0')+CB\varepsilon t^{\varepsilon}\\
&\leq1+|D_{x'}\varphi|^{2}+c_{1}^{2}(x')+C|x'-x_0'|
+2\sqrt{\frac{1+|D_{x'}\varphi|^{2}}{1-H_0^{2}}}(x_0')D+D^{2}+CB\varepsilon t^{\varepsilon}.
\end{align*}
By  \eqref{eq-Expression_c_1}, we have
\begin{align*}
1+|D_{x'}\varphi|^{2}+c_{1}^{2}=\frac{1+|D_{x'}\varphi|^{2}}{1-H_0^{2}},
\end{align*}
and
$$|c_1(x_0')|=|H_0(x_0')|\sqrt{\frac{1+|D_{x'}\varphi|^{2}}{1-H_0^{2}}}(x_0')<\sqrt{\frac{1+|D_{x'}\varphi|^{2}}{1-H_0^{2}}}(x_0').$$
Then,
\begin{align*}
1+|D\overline{w}|^{2}\le \left(\sqrt{\frac{1+|D_{x'}\varphi|^{2}}{1-H_0^{2}}}(x_0')+D+C|x'-x_0'|+CB\varepsilon t^{\varepsilon}\right)^2,
\end{align*}
and hence
$$\sqrt{1+|D\overline{w}|^{2}}\le \sqrt{\frac{1+|D_{x'}\varphi|^{2}}{1-H_0^{2}}}(x_0')+D+C|x'-x_0'|+CB\varepsilon t^{\varepsilon}.$$
We now multiply by $H$ and write $H$ in the first term in the right-hand side as
$$H=H_0(x_0')+(H-H_0(x_0')).$$ We
have \eqref{eq-Estimate1}  by adjusting $C$.

{\it Case 2.}
Assume $H(x',t)<0.$
Set $H=-\widetilde{H}.$
We need to prove
$$-c_{1}(x_0')-\widetilde HD\leq \widetilde{H}\sqrt{1+|D\overline{w}|^{2}}+C|x'-x_0'|+CB\varepsilon t^{\varepsilon}+Ct.$$
Similarly as in Case 1, we have
\begin{align*}
1+|D\overline{w}|^{2}&\ge1+|D_{x'}\varphi|^{2}+c_{1}^{2}(x')-C|x'-x_0'|+2c_1(x_0')D+D^{2}-CB\varepsilon t^{\varepsilon}\\
&\ge
\frac{1+|D_{x'}\varphi|^{2}}{1-H_0^{2}}(x_0')
-2|H_0(x_0')|\sqrt{\frac{1+|D_{x'}\varphi|^{2}}{1-H_0^{2}}}(x_0')D\\
&\qquad+D^{2}-C|x'-x_0'|-CB\varepsilon t^{\varepsilon}.
\end{align*}
Write
$$-|H_0(x_0')|=-1+(1-|H_0(x_0')|),$$
and note
$$(1-|H_0(x_0')|)\sqrt{\frac{1+|D_{x'}\varphi|^{2}}{1-H_0^{2}}}(x_0')\ge (1-|H|_{L^\infty}).$$
Hence,
\begin{align*}1+|D\overline{w}|^{2}\ge \left(\sqrt{\frac{1+|D_{x'}\varphi|^{2}}{1-H_0^{2}}}(x_0')-D\right)^2
+(1-|H|_{L^\infty})D-C|x'-x_0'|-CB\varepsilon t^{\varepsilon}.
\end{align*}
By
$$D\ge B(|x'-x_{0}'|^{2}+t^2)^{\frac{\varepsilon}{2}},$$
and choosing $\varepsilon$ small and $B$ large such that
\begin{align}\label{4,5}
  \begin{split}
    C\varepsilon&<\frac{1}{2}(1-|H|_{L^{\infty}}),\\
    C&<\frac{1}{2}B(1-|H|_{L^{\infty}}),
  \end{split}
\end{align}
we have
$$\sqrt{1+|D\overline{w}|^{2}}\ge \sqrt{\frac{1+|D_{x'}\varphi|^{2}}{1-H_0^{2}}}(x_0')-D.$$
We now multiply by $\widetilde H=\widetilde H+H(x_{0}',0)-H(x_{0}',0).$ Then
$$-c_{1}(x_0')-\widetilde HD-Ct-C|x'-x_0'|\leq \widetilde{H}\sqrt{1+|D\overline{w}|^{2}}.$$
We have \eqref{eq-Estimate1} by \eqref{eq-Expression_c_1} and by adjusting $C$.

By combining \eqref{eq-Estimate01} and \eqref{eq-Estimate02}, we obtain
\begin{align*}
tQ(\overline{w})&\leq Ct+CB\varepsilon t^{\varepsilon}+C|x'-x_0'|
-n(1-|H|)B(|x'-x_{0}'|^{2}+t^2)^{\frac{\varepsilon}{2}}\\
&\qquad+(1+\varepsilon) At^{\varepsilon}(\varepsilon-n(1-|H|)).
\end{align*}
Requiring $\delta_0,\varepsilon$ small and $B$ large such that
\begin{align}\label{3,6,7,8}
   \begin{split} \delta_0 &\ll1,\\
    C\varepsilon&<\frac14n(1-|H|_{L^\infty}),\\
    C&<\frac14Bn(1-|H|_{L^\infty}),\\
    \varepsilon&<\frac12n(1-|H|_{L^\infty}),\end{split}
\end{align}
then we have
$tQ(\overline{w})\leq 0.$

In conclusion, we choose $\varepsilon,B,\delta_0,A,$ successively
such that they satisfy \eqref{1}, \eqref{2}, \eqref{4,5} and \eqref{3,6,7,8}.
We now apply the maximum principle to conclude
$$u\leq\overline{w}\quad\text{in }B_{r_0}'(x_{0}')\times(0,\delta_{0}).$$
For the lower bound, we set
$$\underline{w}(x',t)=\varphi(x')+c_{1}(x_0')t-At^{1+\varepsilon}-B(|x'-x_{0}'|^{2}+t^2)^{\frac{\varepsilon}{2}}t,$$
and proceed similarly. Then,
$$u\geq\underline{w}\quad\text{in }B_{r_0}'(x_{0}')\times(0,\delta_{0}).$$
Hence, \begin{align*}|u-\varphi-c_{1}(x_0')t|\leq At^{1+\varepsilon}+B(|x'-x_{0}'|^{2}+t^2)^{\frac{\varepsilon}{2}}t.\end{align*}
By taking $x'=x'_{0},$ we obtain the desired result.
\end{proof}

We now improve Lemma \ref{lemma-order 2} under the same assumption.

\begin{lemma}\label{t2}
Assume $\varphi\in C^{2}(B_{1}')$ and $H\in C^1(\bar B_1^+)$, with
$|H|<1$ in $\bar B_1^+$. Let
$ u \in C(\bar{B}_{1}^{+})\bigcap C^{2}(B_{1}^{+})$ be a solution of
\eqref{eq-Intro-Equation}-\eqref{eq-Intro-BoundaryValue}.
Then, for any $(x',t)\in B_{r_1}'\times(0,\delta_1),$
\begin{align*}|u-\varphi-c_{1}t|\leq Ct^{2},\end{align*}
where $c_1$ is given by \eqref{eq-Expression_c_1},
and $r_1,\delta_1$ and $C$ are positive constants
depending only on n, $|\varphi|_{C^{2}(B_{{3}/{4}}')},$
$|u|_{L^{\infty}(B_{1}^{+})}$ and
$|H|_{C^{1}(B_{1}^{+})}$.
\end{lemma}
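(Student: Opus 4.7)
The strategy is to bootstrap Lemma \ref{lemma-order 2} by upgrading the barrier to incorporate the second-order local coefficient $c_2(x_0')$ from the formal expansion (Lemma \ref{lemma-FormalExpansion}). Fix $x_0'\in B_{r_1}'$, work in a cylinder $B_{r_0}'(x_0')\times(0,\delta)$ with $r_1,r_0,\delta$ to be chosen, and take
\begin{equation*}
\overline{w}(x',t)=\varphi(x')+c_1(x_0')t+c_2(x_0')t^2+At^2+B\,t\,\bigl(|x'-x_0'|^2+t^2\bigr)^{1/2},
\end{equation*}
with the symmetric lower barrier $\underline{w}$, where $A,B>0$ are to be chosen in terms of the data. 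At $x'=x_0'$ this gives $\overline{w}(x_0',t)-\varphi(x_0')-c_1(x_0')t=(c_2(x_0')+A+B)t^2$, so once $u\le\overline{w}$ is established, the desired quadratic bound follows; the lower bound comes from the analogous $\underline{w}$.

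For the boundary comparison, I use Lemma \ref{lemma-order 2}, which supplies the defect $|u-\varphi-c_1 t|\le Ct^{1+\varepsilon}$: on $\{t=0\}$ equality holds; on $\{|x'-x_0'|=r_0\}$, the term $Br_0 t$ dominates $|c_1(x_0')-c_1(x')|t+Ct^{1+\varepsilon}$ once $B$ is large relative to $|c_1|_{C^1}$; and on $\{t=\delta\}$, the buffer $A\delta^2$ dominates $C\delta^{1+\varepsilon}$ once $A$ is large relative to $\delta^{\varepsilon-1}$. Fixing $r_0$ and $\delta$ of size depending on the data then pins down $B$ and $A$ as constants depending only on the data.

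The decisive step is to verify $tQ(\overline{w})\le 0$ in the interior. Because $c_1(x_0')$ and $c_2(x_0')$ satisfy the defining relations \eqref{eq-Expression_c_1} and \eqref{eq-Expression_c_i} at $x_0'$, the $t^{-1}$ and $t^0$ coefficients of $Q$ applied to the polynomial piece $\varphi(x')+c_1(x_0')t+c_2(x_0')t^2$ vanish at $x'=x_0'$ and are $O(|x'-x_0'|)$ nearby, so this piece contributes $O(|x'-x_0'|)+O(t)$ to $tQ$. The tangential barrier $Bt(|x'-x_0'|^2+t^2)^{1/2}$, modelled on the $B(|x'-x_0'|^2+t^2)^{\varepsilon/2}t$ of Lemma \ref{lemma-order 2}, has its outer $t$-factor cancel the $1/t$ in $-n(u_n-H\sqrt{1+|Du|^2})/t$, producing a term of size $-C_n(1-|H|)B(|x'-x_0'|^2+t^2)^{1/2}$ that absorbs the $O(|x'-x_0'|)$ tangential error under the hypothesis $|H|<1$. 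Finally, a direct computation using $c_1(x_0')=H_0(x_0')\sqrt{(1+|D_{x'}\varphi(x_0')|^2)/(1-H_0^2(x_0'))}$ shows that $At^2$ contributes a dominant linear-in-$A$ term $-2(n-1)(1-H_0^2(x_0'))At$ to $tQ$, which handles the remaining $O(t)$ error. Choosing $r_0, B, \delta, A$ sequentially in that order closes $tQ(\overline{w})\le 0$, and the maximum principle together with the symmetric $\underline{w}$-argument finishes the proof.

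The principal obstacle, exactly as in Lemma \ref{lemma-order 2}, is the bookkeeping of the nonlinear cross-terms mixing $A$, $B$, and the variation of $H$ in $(x',t)$, together with the case split on the sign of $H$ needed to extract the factor $(1-|H|)$ in front of the dominant $B$ contribution. The novelty here is purely the inclusion of $c_2(x_0')t^2$ in the barrier, which reduces the polynomial error in $tQ$ from $O(1)$ to $O(|x'-x_0'|)+O(t)$ and allows the final error estimate to improve from $Ct^{1+\varepsilon}$ to $Ct^2$.
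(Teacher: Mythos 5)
Your barrier is essentially the paper's, and the overall maximum-principle argument is the right one, but the explanation you give for why it works contains a genuine misconception. You add $c_2(x_0')t^2$ to the barrier and call this "the novelty," claiming it "reduces the polynomial error in $tQ$ from $O(1)$ to $O(|x'-x_0'|)+O(t)$." This is not correct. Take the paper's polynomial piece $v(x',t)=\varphi(x')+c_1(x_0')t$ with \emph{no} $c_2$ term: then $v_n=c_1(x_0')$ is constant, $\Delta v$ and the second-order terms are $O(1)$, and the function $\Psi(x',t)=c_1(x_0')-H(x',t)\sqrt{1+|D_{x'}\varphi(x')|^2+c_1^2(x_0')}$ vanishes at $(x_0',0)$ and is $C^1$ under the hypotheses $\varphi\in C^2$, $H\in C^1$. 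Hence $tQ(v)=O(t)+n\Psi=O(t+|x'-x_0'|)$ already, with no $c_2$ in sight; this is exactly the paper's estimate (\ref{e3}). Indeed, the $O(|x'-x_0'|)$ error comes from the $x'$-variation of the $t^{-1}$ coefficient of $Q$ (which is governed by $c_1$, not $c_2$), so inserting $c_2(x_0')t^2$ only changes the $t^0$ coefficient, which after multiplying by $t$ is already $O(t)$ with or without $c_2$. The real mechanism for the upgrade from $Ct^{1+\varepsilon}$ to $Ct^2$ is the bootstrap: Lemma \ref{lemma-order 2} furnishes the $t^{1+\varepsilon}$ bound on the parabolic boundary, which makes the boundary comparison for a barrier with \emph{quadratic} decay possible, and the negative linear-in-$A$ and linear-in-$B$ contributions $-2(n-1)(1-H_0^2)At$ and $-n(1-H_0^2)B(|x'-x_0'|^2+t^2)^{1/2}$ then absorb the $O(t+|x'-x_0'|)$ error.

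Two further points worth noting. First, there is no case split on the sign of $H$ in this lemma; the paper works directly with $1-H_0^2$ rather than $1-|H|$, since the identity $H_0c_1/\sqrt{1+|D_{x'}\varphi|^2+c_1^2}=H_0^2$ handles both signs simultaneously, so your claim that a case split "exactly as in Lemma \ref{lemma-order 2}" is needed imports a complication that is not required here. Second, the paper's proof makes the interdependence of the constants explicit through a careful scaling $r_1=\delta_1^{(1+\varepsilon)/2}$, $A=\widetilde A\delta_1^{-(1-\varepsilon)}$, $B=\widetilde A\delta_1^{-(1-\varepsilon)/2}$, $\widetilde A=\delta_1^{-\varepsilon/2}$, precisely so that $|D\phi|\ll 1$ and $B/A\ll 1$, which are needed to control the nonlinear cross-terms and to absorb the $CBt$ contribution from $D_1$. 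Your plan "choose $r_0,B,\delta,A$ sequentially" can be made to work (since the problematic ratios all degenerate favorably once $\delta$ and $r_0$ are small), but it is not automatic, and glossing over these relations is where a careful writeup of your version would most likely run into difficulty.

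Bottom line: your barrier, with the (harmless but unnecessary) $c_2(x_0')t^2$ term, will go through and reproduce the paper's conclusion, but your stated rationale for the $c_2$ term is wrong, the case split is not needed, and the constant bookkeeping needs the scaling to be made explicit.
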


\begin{proof}
Take any $x'_{0}\in B_{r_1}'$ and let $r_1,\delta_1$ be small positive constants to be determined. Set
$$D= B_{r_1}'(x_{0}')\times(0,\delta_1).$$ Take $\varepsilon$ as in Lemma 3.2. Set
\begin{align}\label{a1}
r_1=\delta_1^{\frac{1+\varepsilon}2},
\end{align}
and take $\delta_1$ small so that
$r_1<{1}/{16}$.
Set
\begin{align*}
v(x',t)&=\varphi(x')+c_{1}(x_0')t,\\
\phi(x',t)&=At^2+Bt(t^{2}+|x'-x'_{0}|^{2})^\frac{1}{2},
\end{align*}
and
$$\overline{w}=v+\phi.$$
We will prove $u\leq \overline{w}$ in $D$.

Consider on $\partial D.$ Obviously, $\overline{w}(x',0)=\varphi(x')=u(x',0).$
Since $c_1$ is $C^1$ in $x'$, we have
$$\overline{w}(x',t)\ge \varphi(x')+c_{1}(x')t-C|x'-x_0'|t+At^2+B|x'-x'_{0}|t.$$
By requiring $B$ large such that
\begin{align}\label{a2}
    B>2C,
\end{align}
we have
$$\overline{w}(x',t)\ge \varphi(x')+c_{1}(x')t+At^2+\frac{B}{2}|x'-x'_{0}|t.$$
Therefore,
\begin{align}\label{e1}
\overline{w}(x',\delta_1)\ge \varphi(x')+c_{1}(x')\delta_1+A\delta_1^2\ge u(x',\delta_1),
\end{align}
 if $A\delta_1^2\ge C_0\delta_1^{1+\varepsilon},$ where $C_0$ is as in (\ref{real2}).
Set $\widetilde{  A}$ to be determined and require
\begin{align}\label{a3}
\widetilde{  A}>C_0.
\end{align}
Take $A=\widetilde{  A}\delta_1^{-(1-\varepsilon)}$. Then,  \eqref{e1} holds.
On $\partial B_{r_1}'(x_{0}')\times(0,\delta_1),$
\begin{align}\label{e2}
\overline{w}(x',t)\ge \varphi(x')+c_{1}(x')t+\frac{1}{2}Br_1t\ge u(x',t),
\end{align}
if $Br_1t/2\ge C_0t^{1+\varepsilon},$ by \eqref{real2}. Using (\ref{a1}), we  require
\begin{align}\label{a4}
   B\ge 2C_0\delta_1^{-\frac{1-\varepsilon}2},
\end{align}
so that \eqref{e2} holds. Set $\widetilde{  B}$ to be determined and require
\begin{align}\label{a5}
\widetilde{  B}>2C_0+2C.
\end{align}
Take $B=\widetilde{  B}\delta_1^{-\frac{1-\varepsilon}{2}}$.
Then, \eqref{a2} and \eqref{a4} are satisfied. In fact, by taking $$\widetilde{A}=\widetilde{B},$$
we have \eqref{a2}, \eqref{a3} and \eqref{a4}.
Now $B=\widetilde{  A}\delta_1^{-\frac{1-\varepsilon}{2}}$ and $\widetilde{A}$ is a large positive constant to be determined.
Hence,
\begin{align}\label{a6}
 \frac{B}{A}=\delta_1^{\frac{1-\varepsilon}{2}}
\end{align}
is small when $\delta_1$ is small.

Next, we calculate $tQ(\overline{w})$. We write
\begin{equation}\label{eq-Expression-Q0}tQ(\overline{w})=tQ(v)+D_{1}+D_{2},\end{equation}
where
$$
D_1=t\bigg(\Delta\phi-\frac{(v_i+\phi_i)(v_j+\phi_j)}{1+|D(v+\phi)^2|}\phi_{ij}
-\bigg[\frac{(v_i+\phi_i)(v_j+\phi_j)}{1+|D(v+\phi)|^2}-\frac{v_iv_j}{1+|Dv|^2}\bigg]v_{ij}\bigg),$$
and
$$D_2=-n\partial_n\phi+nH\big(\sqrt{1+|D(v+\phi)|^{2}}-\sqrt{1+|Dv|^{2}}\big).$$
By the choice of $c_1$, a simple calculation yields
\begin{align}\label{e3}
    tQ(v)\le C(t+|x'-x_0'|).
\end{align}
A straightforward calculation yields,
for $a, b\neq n$.
\begin{align*}
\partial_{a}\phi&=\frac{Bt(x_{a}-x_{0a})}{(t^{2}+|x'-x'_{0}|^{2})^\frac{1}{2}}, \\
\partial_{n}\phi&=2At+B(t^{2}+|x'-x'_{0}|^{2})^\frac{1}{2}+\frac{Bt^2}{(t^{2}+|x'-x'_{0}|^{2})^\frac{1}{2}},
\end{align*}
and
\begin{align*}
t\partial_{ab}\phi&=\frac{B\delta_{ab}t^2}{(t^{2}+|x'-x'_{0}|^{2})^\frac{1}{2}}
-\frac{Bt^2(x_{a}-x_{0a})(x_{b}-x_{0b})}{(t^{2}+|x'-x'_{0}|^{2})^\frac{3}{2}},\\
t\partial_{an}\phi&=\frac{Bt(x_{a}-x_{0a})}{(t^{2}+|x'-x'_{0}|^{2})^\frac{1}{2}}
-\frac{Bt^3(x_{a}-x_{0a})}{(t^{2}+|x'-x'_{0}|^{2})^\frac{3}{2}},\\
t\partial_{nn}\phi&=2At+\frac{3Bt^2}{(t^{2}+|x'-x'_{0}|^{2})^\frac{1}{2}}-\frac{Bt^4}{(t^{2}+|x'-x'_{0}|^{2})^\frac{3}{2}}.
\end{align*}
Hence, we have
\begin{align}\label{b1}
  |\partial_{a}\phi|\le \widetilde{A}\delta_1^{\frac{1}{2}+\frac{\varepsilon}{2}},
\end{align}
and
\begin{align}\label{b2}
 |\partial_{n}\phi|\le  4\widetilde{A}\delta_1^{\frac{1}{2}+\frac{\varepsilon}{2}}+2\widetilde{A}\delta_1^\varepsilon.
\end{align}
Let $$\widetilde{A}=\delta_1^{-\frac{\varepsilon}{2}}.$$
In the following, we always assume $\delta_1$ is small. Then, \eqref{a3} and \eqref{a5} are satisfied and
\begin{align}\label{b3}
 |D\phi|\le  C\delta_1^\frac{\varepsilon}{2}\ll1.
\end{align}
By the Taylor expansion, we have
\begin{align*}
\sqrt{1+|D(v+\phi)|^{2}}-\sqrt{1+|Dv|^2}=\frac{Dv\cdot D\phi+|D\phi|^2/2}{\sqrt{1+|Dv|^2}}
+O\bigg(\frac{(Dv\cdot D\phi+|D\phi|^2)^2}{(1+|Dv|^2)^{3/2}}\bigg).
\end{align*}
Note
\begin{align*}
 1+|Dv|^{2}=1+|D_{x'}\varphi|^2+c_1^2(x'_{0})=\frac{1+|D_{x'}\varphi|^2}{1-H_0^2}(x'_{0})
 +O(|x'-x'_{0}|).
\end{align*}
Hence,
$$\frac{\partial_nv\partial_n\phi}{\sqrt{1+|Dv|^2}}=\partial_n\phi H_0(x'_{0})+O(|\partial_n\phi||x'-x'_{0}|).$$
Therefore,
\begin{align}\label{c1}
\begin{split}
&\sqrt{1+|D(v+\phi)|^{2}}-\sqrt{1+|Dv|^{2}}\\
&\qquad=\partial_n\phi H_0(x'_{0})+
O(|D_{x'}\phi|)+O(|\partial_n\phi||x'-x'_{0})+O(|D\phi|^2).
\end{split}
\end{align}
We also have
\begin{align}\label{c2}\begin{split}
 1-\frac{(v_n+\phi_n)^2}{1+|D(v+\phi)|^2}&=\frac{1+|D_{x'}\varphi|^2(x')}{1+|D_{x'}\varphi|^2(x')+c_1^2(x'_{0})}+O(|D\phi|)\\
&=1-H_0^2(x')+O(|D\phi|+|x'-x_0'|),\end{split}
\end{align}
Note $v_{ij}\neq0$ only if $i,j\neq n .$ 
By \eqref{c2} and $|D_{ai}\phi|\le 2B$, we have
$$D_1\le 2At\big(1-H_0^2(x'_{0})+C|D\phi|+C|x'-x_0'|\big)+CBt.$$
By \eqref{c1}, we get
$$\aligned D_2&\le-n\partial_n\phi\big(1-HH_0(x'_{0})-(C|x'-x_0'|+C|D\phi|)\big)
\\&\le -n(2At+B(t^{2}+|x'-x'_{0}|^{2})^\frac{1}{2}\\
&\qquad+\frac{Bt^2}{(t^{2}+|x'-x'_{0}|^{2})^\frac{1}{2}}
\big(1-H_0^2(x'_{0})-(C|x'-x_0'|+Ct+C|D\phi|)\big)\\
&\le2At\big(-n(1-H_0^2(x'_{0}))+C|x'-x_0'|+Ct+C|D\phi|\big)\\
&\qquad+B(t^{2}+|x'-x'_{0}|^{2})^\frac{1}{2}\big(-n(1-H_0^2(x'_{0}))+C|x'-x_0'|+Ct+C|D\phi|\big).\endaligned$$
Then, we have
\begin{align*}
D_1+D_2&\le 2At\{(1-H_0^2(x'_{0}))(1-n)+C|D\phi|+C|x'-x_0'|+Ct\}\\
&\qquad+B|x'-x'_{0}|\big(-\frac{n}{2}(1-H_0^2(x'_{0}))\big)+CBt.\end{align*}
Choose $\delta_1$ small. Then by definitions of $A,B$ and (\ref{a1}), (\ref{b3}), (\ref{e3}) and (\ref{a6}), we have
$$\aligned tQ(\overline{w})&\le 2At\big((1-H_0^2(x'_{0}))(1-n)+C\delta_1^\frac{\varepsilon}{2}\big)
+B|x'-x'_{0}|\big(-\frac{n}{2}(1-H_0^2(x'_{0}))+C\delta_1^\frac{1-\varepsilon}{2}\big)\\
&\le 2At\big((1-|H|_{L^{\infty}})(1-n)+C\delta_1^\frac{\varepsilon}{2}\big)
+B|x'-x'_{0}|\big(-\frac{n}{2}(1-|H|_{L^{\infty}}^2)+C\delta_1^\frac{1-\varepsilon}{2}\big).\endaligned$$
Then we have, for small $\delta$,
$$Q(\overline{w})=Q(v+\phi)\le 0\quad\text{in }D.$$
By the maximum principle, we get
\begin{align*}\begin{split}u\leq v+\phi\leq\varphi(x')+c_{1}(x_0')t+At^2+B(t^{2}+|x'-x'_{0}|^{2})^\frac{1}{2}t.\end{split}\end{align*}
For the lower bound, we consider $v-\phi$ and, by proceeding similarly, we get
\begin{align*}\begin{split}u&\geq \varphi(x')+c_{1}(x_0')t-At^2-B(t^{2}+|x'-x'_{0}|^{2})^\frac{1}{2}t.\end{split}\end{align*}
By taking $x'=x'_{0},$ we have the desired result.
\end{proof}

Next, we prove an estimate for an expansion of solutions involving all the local terms
by the maximum principle. The proof is similar to that of Lemma \ref{t2}.

\begin{theorem}\label{thrm-order n+1}
Assume $\varphi\in C^{n+3}(B_{1}')$ and $H\in C^{n+2}(\bar B_1^+)$, with
$|H|<1$ in $\bar B_1^+$. Let
$ u \in C(\bar{B}_{1}^{+})\bigcap C^{n+3}(B_{1}^{+})$ be a solution of
\eqref{eq-Intro-Equation}-\eqref{eq-Intro-BoundaryValue}.
Then, for any $(x',t)\in B_{1/16}'\times(0,\delta),$
\begin{align}\label{n+1}|u-\varphi-c_{1}t-c_{2}t^{2}-\cdots-c_{n}t^{n}
-c_{n+1,1}t^{n+1}\log t|\leq Ct^{n+1},\end{align}
where $c_1, \cdots, c_n$ and $c_{n+1,1}$ are functions in $B_1'$
as given in Lemma \ref{lemma-FormalExpansion},
and $\delta$ and $C$ are positive constants
depending only on n, $|\varphi|_{C^{n+3}(B_{1}')},$
$|u|_{L^{\infty}(B_{1}^{+})}$ and
$|H|_{C^{n+2}(B_{1}^{+})}$.
\end{theorem}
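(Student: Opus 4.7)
The plan is to extend the barrier argument of Lemma \ref{t2} to include all local terms in the expansion, proceeding either inductively (adding one term at a time, with intermediate lemmas of the same form as Lemmas \ref{lemma-order 2}--\ref{t2} at orders $3, 4, \ldots, n$) or in a single sweep at level $n+1$. Fix $x_0' \in B_{1/16}'$ and, on a cylinder $D = B_r'(x_0') \times (0, \delta)$ with $r$ a small power of $\delta$ chosen to match the inductive deficit, define the frozen-coefficient approximation
\begin{equation*}
v(x',t) = \varphi(x') + \sum_{i=1}^n c_i(x_0')\, t^i + c_{n+1,1}(x_0')\, t^{n+1}\log t,
\end{equation*}
and the barrier correction
\begin{equation*}
\phi(x',t) = A t^{n+1} + B t^n \sqrt{t^2 + |x'-x_0'|^2},
\end{equation*}
with $A, B$ large positive constants (of sizes in $\delta$ matched to the inductive deficit, as in Lemma \ref{t2}). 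Evaluating at $x' = x_0'$ will give $|u - v|(x_0', t) \le (A+B)\, t^{n+1}$, which yields the claim upon varying $x_0'$.

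The first step is to verify $u \le v + \phi$ on $\partial D$. On the bottom face $\{t = 0\}$ the two agree. On the top face $\{t = \delta\}$ and on the lateral face $\partial B_r'(x_0') \times (0, \delta)$, one invokes the inductive estimate at the previous order (i.e., $|u - v_n| \le C t^{n}$ or the analogous bound with a small $\varepsilon$-improvement, obtained by the same barrier argument), and then the scaling of $r, \delta, A, B$ is arranged so that $\phi$ dominates this deficit on each face.

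The second step is to show $tQ(v + \phi) \le 0$ in $D$. Split
\begin{equation*}
tQ(v + \phi) = tQ(v) + D_1 + D_2,
\end{equation*}
exactly as in the proof of Lemma \ref{t2}. Using Lemma \ref{lemma-FormalExpansion} with the coefficient identities Taylor-expanded around $x_0'$, one obtains $tQ(v) \le C t^{n+1}\log(1/t) + C|x'-x_0'|$. For $D_1$ and $D_2$, the analysis of Lemma \ref{t2} goes through: Taylor-expand $\sqrt{1+|D(v+\phi)|^2}$ around $\sqrt{1+|Dv|^2}$, and use the key identity
\begin{equation*}
\frac{v_n^2}{1+|Dv|^2}\bigg|_{(x_0',0)} = H_0^2(x_0'),
\end{equation*}
which follows from $c_1 = H_0 \sqrt{(1+|D_{x'}\varphi|^2)/(1-H_0^2)}$. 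This yields a leading contribution $-n(1-H_0^2(x_0'))\, \partial_n\phi$, of order $A t^n$ plus a positive multiple of $B t^{n-1}\sqrt{t^2+|x'-x_0'|^2}$, which dominates the positive errors above for $A, B$ large and $\delta$ small. The maximum principle then gives $u \le v + \phi$ in $D$, and the reverse inequality $u \ge v - \phi$ follows by an identical argument with $-\phi$ in place of $\phi$.

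The main obstacle is the bookkeeping in the second step, especially absorbing the $\log t$ factors produced by differentiating the term $c_{n+1,1} t^{n+1}\log t$ in $v$: these factors contaminate $v_n$, $v_{nn}$, and the coefficient $v_i v_j/(1+|Dv|^2)$, polluting $tQ(v)$ with terms of order $t^{n+1}\log(1/t)$. The gap between this and the negative barrier contribution of order $A t^n$ is exactly $t\log(1/t) \to 0$, so these terms are absorbed once $\delta$ is small. A secondary subtlety is the scaling of $r$ in $\delta$: it must be small enough that the inductive deficit at level $n$ controls $u - v$ on the lateral face, yet large enough that the $x'$-derivative terms in $D_1$ (which grow like $B$ when $|x'-x_0'|$ and $t$ are comparable) do not overwhelm the negative contribution.
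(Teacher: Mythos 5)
There is a genuine gap, and it has two interlocking parts, both arising from copying the Lemma \ref{t2} setup to order $n+1$ without adjusting for the special role of the indicial exponent $n+1$.

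\textbf{Freezing the coefficients breaks the cancellation in $tQ(v)$.} Your $v$ uses $c_i(x_0')$, not $c_i(x')$. The cancellations guaranteed by Lemma \ref{lemma-FormalExpansion} come from the relations \eqref{eq-Expression_c_i}, each of which involves \emph{tangential derivatives} of the lower coefficients (e.g.\ $c_2$ contains $D_{x'}c_1$). With the $c_i$ frozen to constants, all those tangential-derivative contributions vanish from $Q(v)$, so the coefficients of $t^0,\dots,t^{n-2}$ in $Q(v)$ no longer cancel. In fact, even at $x'=x_0'$, one finds $tQ(v)=O(t)$, not $O(t^{n+1}\log(1/t))+O(|x'-x_0'|)$ as you assert. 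Freezing $c_1$ only is harmless at order $t^2$ (Lemma \ref{t2}) because the coefficient equation for $c_2$ has not yet been used; but at order $t^{n+1}$ the higher coefficient relations must be in force. The paper avoids this entirely by keeping $v=c_0(x')+c_1(x')t+\cdots+c_{n+1,1}(x')t^{n+1}\log t$ with the actual functions, so that Lemma \ref{lemma-FormalExpansion} gives $|tQ(v)|\le Ct^{n+1}\log t^{-1}$ directly.

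\textbf{The barrier sits in the kernel of the reduced operator.} You claim the leading barrier contribution is $-n(1-H_0^2(x_0'))\partial_n\phi\sim -nAt^n$. But you have omitted the competing term from $D_1$: the $\partial_{nn}\phi$ part contributes $(1-H_0^2)\,t\,\partial_{nn}\phi\sim (1-H_0^2)n(n+1)At^n$, which exactly cancels the $D_2$ contribution $-n(1-H_0^2)(n+1)At^n$. This is no accident: $t^{n+1}$ is an indicial solution of $t\partial_{tt}-n\partial_t$, so $\phi=At^{n+1}$ (and likewise $Bt^n\sqrt{t^2+|x'-x_0'|^2}$ along $x'=x_0'$) contributes nothing negative at leading order. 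Thus even if $tQ(v)$ were as small as you claim, your barrier has no negative term to absorb the errors. The paper's barrier $\phi=A f^{n+1}-Af^{q}$ with $f=|x'-x_0'|^2+t$ and $n+1<q<n+1+\min\{1/2,\varepsilon\}$ is designed precisely to escape this resonance: the $Af^{n+1}$ pieces cancel as above, while the $-Af^q$ piece (off the indicial exponent) yields the strictly negative contribution $-q(q-n-1)(1-H_0^2)Af^{q-1}$, which dominates the $O(Af^{n+\varepsilon})$ and $O(Af^{n+1/2})$ errors because $q-1<n+\varepsilon$ and $q-1<n+1/2$.

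Your intuition that the $\log$ bookkeeping is the main obstacle is therefore misplaced: the $\log$ factors are a minor nuisance, absorbed as you say. The real obstacles are (i) freezing, which inflates $tQ(v)$ by $n$ powers of $t$, and (ii) the resonance at $t^{n+1}$, which neutralizes your barrier. Fixing (i) by keeping $c_i(x')$ and fixing (ii) by using the off-resonance power $f^q$ essentially reproduces the paper's proof.
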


\begin{proof}
Take any $x'_{0}\in B_{1/16}'$ and $\delta$ a small positive constant to be determined.  Consider in
$D= B_{\sqrt{\delta}}'(x_{0}')\times(0,\delta).$
Set
$$v(x',t)=c_{0}(x')+c_{1}(x')t+c_{2}(x')t^{2}+\cdot\cdot\cdot+c_{n}(x')t^{n}
+c_{n+1,1}(x')t^{n+1}\log t,$$
and
$$\phi(x',t)=A(|x'-x'_{0}|^{2}+t)^{n+1}-A(|x'-x'_{0}|^{2}+t)^{q}.$$
We will prove $u\leq v+\phi.$

For convenience, we write
$$f=|x'-x'_{0}|^{2}+t.$$ Hence,
$\phi=Af^{n+1}-Af^{q}$ and $f\leq2\delta$ in $D$.
A straightforward calculation yields,
for $a, b\neq n$,
\begin{align*}
\partial_{a}\phi&=2(n+1)Af^{n}(x'_{a}-x'_{0a})-2qAf^{q-1}(x'_{a}-x'_{0a}), \\
\partial_{n}\phi&=(n+1)Af^{n}-qAf^{q-1},
\end{align*}
and
\begin{align*}
\partial_{ab}\phi&=4n(n+1)Af^{n-1}(x'_{a}-x'_{0a})(x'_{b}-x'_{0b})+2(n+1)Af^{n}\delta_{ab}
\\&\qquad-4q(q-1)Af^{q-2}(x'_{a}-x'_{0a})(x'_{b}-x'_{0b})-2qAf^{q-1}\delta_{ab},\\
\partial_{an}\phi&=2(n+1)nAf^{n-1}(x'_{a}-x'_{0a})-2q(q-1)Af^{q-2}(x'_{a}-x'_{0a}),\\
\partial_{nn}\phi&=(n+1)nAf^{n-1}-q(q-1)Af^{q-2}.
\end{align*}
Let $\varepsilon$ be the constant in Lemma \ref{lemma-order 2}. Choose $q$ such that
\begin{align}\label{q}n+1< q< n+1+\min\big\{{1}/{2},\varepsilon\big\}.\end{align}

We first note
$$u\leq v+\phi\quad\text{on }\partial D.$$
This is obviously true on $t=0$. For other parts of $\partial D$, by Lemma \ref{lemma-order 2}, we need to require
\begin{align*}
\frac{C}{\delta^{n-\varepsilon}}= A,\end{align*}
where $C$ is a constant depending only on $n$, $|\varphi|_{C^{n+1}(B_{1}^{+})}$,
$|u|_{L^{\infty}(B_{1}^{+})}$ and
$|H|_{C^{n}(B_{1}^{+})}$.
We also have
\begin{align}\label{star}|D\phi|\le CAf^{n}\leq
C\frac{f^{n}}{\delta^{n-\varepsilon}}\le Cf^{\varepsilon}. \end{align}
Then,
\begin{align}\label{star2}
\begin{split}
1+|D(v+\phi)|^{2}&=1+|Dv|^2+2Dv\cdot D\phi+|D\phi|^2\\
&=1+|Dv|^{2}+O(Af^{n})\\
&=1+c_{1}^{2}+|D_{x'}c_{0}|^{2}+O(x_{n})+O(Af^{n})\\
&=1+c_{1}^{2}+|D_{x'}c_{0}|^{2}+O(f^{\epsilon}).
\end{split}
\end{align}

Next, we write
\begin{equation}\label{eq-Expression-Q}tQ(v+\phi)=tQ(v)+D_{1}+D_{2},\end{equation}
where
$$
D_1=t\bigg(\Delta\phi-\frac{(v_i+\phi_i)(v_j+\phi_j)}{1+|D(v+\phi)^2|}\phi_{ij}
-\bigg[\frac{(v_i+\phi_i)(v_j+\phi_j)}{1+|D(v+\phi)|^2}-\frac{v_iv_j}{1+|Dv|^2}\bigg]v_{ij}\bigg),$$
and
$$
D_2=-n\partial_n\phi+nH\big(\sqrt{1+|D(v+\phi)|^{2}}-\sqrt{1+|Dv|^{2}}\big).$$
By Lemma \ref{lemma-FormalExpansion}, we have
\begin{equation}\label{eq-EstimateQ}|tQ(v)|\le Ct^{n+1}\log t^{-1}\le Ct^{n+\frac{1}{2}}.
\end{equation}
We need to estimate terms in $tQ(v+\phi)$ involving $A$.
We consider $D_1$ first. By \eqref{star}, each $D\phi$ has the order of $Af^n$, which is also bounded by
$f^\varepsilon$ by the choice of $A$. Then,
$$\bigg|\frac{(v_i+\phi_i)(v_j+\phi_j)}{1+|D(v+\phi)|^2}-\frac{v_iv_j}{1+|Dv|^2}\bigg|\le CAf^n.$$
Next, for the second derivatives of $\phi$, we first have
$$|\partial_{an}\phi|+|\partial_{ab}\phi|\le CA(f^n+f^{n-1}|x'|)\le CAf^{n-\frac12}.$$
The coefficient of $\partial_{nn}\phi$ is given by
$$\aligned 1-\frac{(v_n+\phi_n)^2}{1+|D(v+\phi)|^2}&=1
-\frac{c_1^2+O(t)+O(Af^{n})}{1+c_{1}^{2}+|D_{x'}c_{0}|^{2}+O(t)+O(Af^{n})}\\
&=1-H_0^2+O(f^\varepsilon),\endaligned$$
where we used \eqref{eq-Expression_c_1}. Therefore,
$$D_1\le t\big[\big(1-H_0^2+Cf^\varepsilon\big)\big((n+1)nAf^{n-1}-q(q-1)Af^{q-2}\big)+CAf^{n-\frac12}\big],$$
and hence
$$
D_1\le t(1-H_0^2)\big[(n+1)nAf^{n-1}-q(q-1)Af^{q-2}\big]+CAf^{n+\frac12}+CAf^{n+\varepsilon}.
$$
By $t=f-|x'-x_0'|^2$, we have
$$\aligned
D_1&\le (1-H_0^2)\big[(n+1)nAf^{n}-q(q-1)Af^{q-1}\big]+CAf^{n+\frac12}+CAf^{n+\varepsilon}\\
&\qquad-(1-H_0^2)Af^{n-1}\big[n(n+1)-q(q-1)f^{q-n-1}\big]|x'-x_0'|^2.
\endaligned$$
As long as $f$ is small, we obtain
\begin{equation}\label{eq-ExpressionD1}
D_1\le (1-H_0^2)\big[(n+1)nAf^{n}-q(q-1)Af^{q-1}\big]+CAf^{n+\frac12}+CAf^{n+\varepsilon}.
\end{equation}
Next, we discuss $D_{2}.$
By writing \begin{align*}
1+|D(v+\phi)|^{2}
=1+|Dv|^{2}+2Dv\cdot D\phi+|D\phi|^2,
\end{align*}
we have
\begin{align*}
\sqrt{1+|D(v+\phi)|^{2}}=\sqrt{1+|Dv|^2}+\frac{Dv\cdot D\phi+|D\phi|^2/2}{\sqrt{1+|Dv|^2}}
+O\bigg(\frac{(Dv\cdot D\phi+|D\phi|^2)^2}{(1+|Dv|^2)^{3/2}}\bigg).
\end{align*}
Note $D_{x'}\phi=O(Af^n|x'|)$ and $|D\phi|^2=O(Af^{n+\varepsilon})$. Then,
$$\sqrt{1+|D(v+\phi)|^{2}}=\sqrt{1+|Dv|^{2}}+\frac{c_1\partial_n\phi}{\sqrt{1+|Dv|^{2}}}
+O(Af^{n+\varepsilon})+O(Af^{n+\frac12}).$$
Hence,
$$D_2=-n\partial_n\phi\bigg(1-\frac{Hc_1}{\sqrt{1+|Dv|^{2}}}\bigg)+O(Af^{n+\varepsilon})
+O(Af^{n+\frac12}).$$
By $1+|Dv|^2=1+|D_{x'}c_0|^2+c_1^2+O(t)$ and \eqref{eq-Expression_c_1}, we have
$$D_2\le -n\big(1-H_0^2+Ct\big)\big((n+1)Af^{n}-qAf^{q-1}\big)
+CAf^{n+\varepsilon}+CAf^{n+\frac12},$$
and hence
\begin{equation}\label{eq-ExpressionD2}D_2\le -n(1-H_0^2)\big[(n+1)Af^{n}-qAf^{q-1}\big]
+CAf^{n+\varepsilon}+CAf^{n+\frac12}.\end{equation}
By combining \eqref{eq-ExpressionD1} and \eqref{eq-ExpressionD2}, we obtain
$$D_1+D_2\le -q(q-n-1)(1-H_0^2)Af^{q-1}+CAf^{n+\varepsilon}+CAf^{n+\frac12},$$
and hence, with the help of \eqref{eq-Expression-Q} and \eqref{eq-EstimateQ},
$$tQ(v+\phi)\le -q(q-n-1)(1-H_0^2)Af^{q-1}
+CAf^{n+\varepsilon}+CAf^{n+\frac12}.$$
By the choice of $q$ in \eqref{q}, we have, for small $\delta$,
$$Q(v+\phi)\le 0\quad\text{in }D.$$
By the maximum principle, we have
\begin{align*}\begin{split}u\leq v+\phi
&\leq \varphi(x')+c_{1}(x')t+c_{2}(x')t^{2}+\cdots+c_{n}(x')t^{n}
\\&\qquad+c_{n+1,1}(x')t^{n+1}\log t+A(|x'-x'_{0}|^{2}+t)^{n+1}.\end{split}\end{align*}
For the lower bound, we consider $v-\phi$ and, by proceeding similarly, we get
\begin{align*}\begin{split}u&\geq \varphi(x')+c_{1}(x')t+c_{2}(x')t^{2}+\cdot\cdot\cdot+c_{n}(x')t^{n}
\\&\qquad+c_{n+1,1}(x')t^{n+1}\log t-A(|x'-x'_{0}|^{2}+t)^{n+1}.\end{split}\end{align*}
By taking $x'=x'_{0},$ we have the desired result. \end{proof}


\section{The Tangential Regularity}\label{Sec-TangentialSmooth}

In the present section, we study  the regularity along tangential directions.

Let $u\in C(\bar{B}_{1}^{+})\bigcap C^{2}(B_{1}^{+})$
be a solution of
\eqref{eq-Intro-Equation} and \eqref{eq-Intro-BoundaryValue}. Set
\begin{equation}\label{v-Def}v(x',t)=u(x',t)-\varphi(x')-c_{1}(x')t,\end{equation}
where $c_1$ is given by \eqref{eq-Expression_c_1}.
We write \eqref{eq-Intro-Equation} as
\begin{equation}\label{aij}
A_{ij}(Du)u_{ij}-\frac{n}{t}(u_n-H\sqrt{1+|Du|^2})=0\quad\text{in }B_1^+,
\end{equation}
where
$$A_{ij}(p)=\delta_{ij}-\frac{p_ip_j}{1+|p|^2}.$$
 In particular,
\begin{align}\label{v-Eq23}
A_{nn}&=\frac{1+|D_{x'}v+D_{x'}\varphi+D_{x'}c_{1}t|^{2}}
{1+|D_{x'}v+D_{x'}\varphi+D_{x'}c_{1}t|^{2}+(v_{n}+c_{1})^{2}}.\end{align}
These expressions will be needed later on.

It is natural to write an equation for $v$. However, there are drawbacks using
the equation for $v$ directly. We note that the regularity of $v$ is one degree worse than that of
$\varphi$ by the expression of $c_1$ in \eqref{eq-Expression_c_1}. If $\varphi$ is $C^2$, then $v$ is only $C^1$
and hence there is no equation for $v$. Second, even if $\varphi$ is at least $C^3$ and we can write an
equation for $v$, the derivatives of $c_1$ result in expressions with worse regularity in this equation.
In this way, we are unable to get optimal estimate for $u$ or $v$. In the following, we will modify the definition
of $v$ in \eqref{v-Def} and, instead of the function $c_1(x')$, we use its value $c_1(x_0')$ at some point $x_0'$.
Similar methods were used in the proof of Lemma \ref{lemma-order 2} and Lemma \ref{t2}.

We first prove a gradient estimate under the assumptions of Lemma \ref{t2}.

\begin{lemma}\label{lemma-d-order 2}
Assume $\varphi\in C^{2}(B_{1}')$ and $H\in C^1(\bar B_1^+)$, with
$|H|<1$ in $\bar B_1^+$. Let
$ u \in C(\bar{B}_{1}^{+})\bigcap C^{2}(B_{1}^{+})$ be a solution of
\eqref{eq-Intro-Equation}-\eqref{eq-Intro-BoundaryValue} and $v$ be defined as in
\eqref{v-Def}.
Then, there exists a constant $r\in (0,1)$, such that for any $(x',t)\in B_{r}'\times(0,r),$
\begin{align}\label{d2u}|Dv|\leq Ct,\end{align}
where $r$ and $C$ are positive constants
depending only on n, $|\varphi|_{C^{2}(B_{{3}/{4}}')},$
$|u|_{L^{\infty}(B_{1}^{+})}$, and
$|H|_{C^{1}(B_{1}^{+})}$.
Moreover, $u\in C^{1,\beta}(\bar {B}_{r}'\times[0,r]),$ for any $\beta\in (0,1),$ and
$v\in C^{1}(\bar {B}_{r}'\times[0,r]).$ In addition, if $\varphi \in C^{2,{\alpha}}(\bar {B}_{1}')$ and
$H \in C^{1,{\alpha}}(\bar {B}_{1}')$ for some $\alpha\in (0,1)$,
then $v\in C^{1,{\alpha}}(\bar {B}_{r}'\times[0,r]).$
\end{lemma}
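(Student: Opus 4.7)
The plan is to localize at an interior base point $(y_0', t_0)$ with $t_0$ small, shift to avoid the loss-of-derivative inherent in $c_1(x')$, rescale to a ball of unit radius, and then apply standard interior elliptic theory. Concretely, fix $y_0' \in B'_{r_1/2}$ and $t_0 \in (0, \delta_1/4)$, set $r_0 = t_0/2$, and define the shifted function
$$\tilde v(x', t) = u(x', t) - \varphi(x') - c_1(y_0')\, t.$$
Since $c_1 \in C^1(B'_{r_1})$ under the standing hypotheses ($\varphi \in C^2$, $H \in C^1$), Lemma \ref{t2} yields
$$|\tilde v(x', t)| \le |v(x', t)| + |c_1(x') - c_1(y_0')|\, t \le C(t^2 + |x' - y_0'|\, t),$$
so on the Euclidean ball $B_{r_0}(y_0', t_0) \subset B_1^+$, where $t \asymp t_0$ and $|x' - y_0'| \le r_0 \le t$, we have $|\tilde v| \le C t_0^2$.

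Next, substituting $u = \tilde v + \varphi + c_1(y_0') t$ into \eqref{aij} produces a quasilinear elliptic equation for $\tilde v$ whose $1/t$ term is harmless: the bracket $\tilde v_n + c_1(y_0') - H\sqrt{1 + |Du|^2}$ vanishes at $(y_0', 0)$ by the defining relation \eqref{eq-Expression_c_1}, hence is $O(t_0)$ on $B_{r_0}(y_0', t_0)$. Rescale by setting
$$w(y) = \frac{1}{r_0^2}\, \tilde v(y_0' + r_0 y', t_0 + r_0 y_n), \quad y \in B_1 \subset \mathbb R^n.$$
Then $|w|_{L^\infty(B_1)} \le C$ and $w$ satisfies a uniformly elliptic quasilinear equation on $B_1$ with bounded coefficients and bounded right-hand side, the factor $r_0/(t_0 + r_0 y_n)$ now being bounded. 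Applying a standard interior gradient estimate for mean-curvature-type equations and then linear $W^{2,p}$ theory yields $|Dw(0)| \le C$, whence $|D\tilde v(y_0', t_0)| = r_0|Dw(0)| \le C t_0$. Since $Dv - D\tilde v = (-D_{x'} c_1 \cdot t,\, c_1(y_0') - c_1(x'))$ is $O(t_0)$ at $(y_0', t_0)$ by the $C^1$-regularity of $c_1$, estimate \eqref{d2u} follows.

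Finally, from $|Dv| \le C t$ the gradient $Du$ extends continuously to $\{t = 0\}$ with $Du(x', 0) = (D_{x'}\varphi(x'), c_1(x'))$, giving $v \in C^1(\bar B'_r \times [0, r])$. Sobolev embedding of the interior $W^{2, p}$ estimate for $w$ furnishes $|Dw|_{C^\beta(B_{1/2})} \le C$ for every $\beta < 1$, and undoing the scaling produces the interior bound $|Du(x) - Du(\tilde x)| \le C t^{1 - \beta}\, |x - \tilde x|^\beta$ for $x, \tilde x$ at depth $\asymp t$. A case split based on whether $|x - \tilde x|$ is smaller or larger than $t/2$, combined with the vertical oscillation $|Du(x', t) - Du(x', 0)| = |Dv(x', t)| \le C t$ and the $C^1$-regularity of the trace $Du(\cdot, 0)$, delivers $u \in C^{1,\beta}(\bar B'_r \times [0, r])$ for every $\beta \in (0, 1)$. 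Under the sharper hypotheses $\varphi \in C^{2,\alpha}$ and $H \in C^{1,\alpha}$ the rescaled coefficients lie in $C^\alpha$, and interior Schauder upgrades the same argument to $v \in C^{1,\alpha}(\bar B'_r \times [0, r])$. The main obstacle is the loss-of-derivative inherent in $v$: since $c_1$ contains $D\varphi$, writing an equation for $v$ directly would demand an extra derivative of $\varphi$ and $H$; the shift by the \emph{constant} $c_1(y_0')$ circumvents this by invoking \eqref{eq-Expression_c_1} only pointwise, precisely enough to cancel the $1/t$ singularity after rescaling.
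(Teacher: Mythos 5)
Your overall strategy --- shift by the constant $c_1(y_0')$ at a base point, rescale to a unit ball, and apply interior elliptic theory, then extend from scales $\asymp t$ to $\bar B_r'\times[0,r]$ by a case split --- is essentially the paper's approach. However, the step where you tame the singular $\tfrac{n}{t}$ term is circular as written. You assert that the bracket $\tilde v_n + c_1(y_0') - H\sqrt{1+|Du|^2}$ ``vanishes at $(y_0',0)$ by \eqref{eq-Expression_c_1}, hence is $O(t_0)$ on $B_{r_0}(y_0',t_0)$.'' For the vanishing at $(y_0',0)$ you need $\tilde v_n(y_0',0)=0$, i.e.\ $u_n(y_0',0)=c_1(y_0')$; and for the $O(t_0)$ bound you effectively need $|D\tilde v|\le Ct_0$ --- both are precisely what the lemma is meant to establish. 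The only a priori input at this stage is the $L^\infty$ bound $|u-\varphi-c_1 t|\le Ct^2$ from Lemma~\ref{t2}, which does not give a pointwise boundary value of $Du$ without first proving continuity of $Du$ up to $\{t=0\}$.

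The paper resolves this without circularity by (i) first establishing the gradient bound $|Du^\delta|\le C$ from the interior gradient estimate for the rescaled mean-curvature-type equation for $u^\delta$, then de~Giorgi--Moser and interior $C^{1,\alpha}$ estimates for $Du^\delta\in C^\alpha$; and (ii) decomposing $\partial_n u^\delta - H\sqrt{1+|Du^\delta|^2}=\partial_n\tilde v^\delta + D_1 + D_2 + H(x_0',0)D_3$ and evaluating $D_1$ by an \emph{exact} fundamental-theorem-of-calculus identity, $D_1 = c_1(x_0')\int_0^1\bigl[1+\tfrac{sh}{1+|D_{x'}\varphi|^2+c_1^2}\bigr]^{-1/2}ds\, h_i\partial_i\tilde v$, exhibiting $D_1$ as exactly linear in $D\tilde v^\delta$ with bounded $C^\alpha$ coefficients, while $D_2, D_3 = O(\delta)$ follow from $H\in C^1$, $\varphi\in C^2$. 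The linear-in-$D\tilde v$ part is absorbed into the first-order coefficients $b_i^\delta$ rather than treated as inhomogeneity; it is only after this that the rescaled equation becomes a linear uniformly elliptic equation with bounded coefficients and an $O(\delta)$ right-hand side, to which $C^{1,\beta}$ estimates apply. In your $r_0^{-2}$-normalization the corresponding right-hand side, unless this exact decomposition is carried out, contains $Dw$-dependent terms at the $O(1/r_0)$ level that are not a priori controlled. You should insert such an exact expansion of $\sqrt{1+|Du|^2}$ around $\sqrt{1+|D_{x'}\varphi(x')|^2+c_1(y_0')^2}$ before invoking linear theory. A secondary omission: to get $W^{2,p}$ or $C^{1,\beta}$ constants uniform in $t_0$ one needs a uniform modulus of continuity for the frozen leading coefficients $A_{ij}(Du)$, which the paper supplies precisely through the de~Giorgi--Moser step applied to $\partial_k u^\delta$.
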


\begin{proof} Take any $(x_{0}',t_{0})\in  B_{r}'\times(0,r)$ for some positive $r<\frac{1}{2}\min\{r_1,\delta_1\}$
and set $t_{0}=2\delta.$
Without loss of generality, we assume $\varphi(x'_{0})=0$. Then, by Lemma \ref{t2},
\begin{align}\label{new-1}
|u|\leq|v|+|\varphi-\varphi(x'_{0})|+|\varphi(x'_{0})|+|c_{1}t|\leq C(t+|x'-x'_{0}|),
\end{align}
where $C$ is a positive constant depending only on
$C_{0}$, $|\varphi|_{C^{1}}$ and $|H|_{L^{\infty}}$.
With $e_{n}=(0,1),$
consider the transform $T:B_{1}(e_{n})\rightarrow B_{2r}'\times(0,2r),$ given by
$$x'=x'_{0}+\delta z',\quad t=\delta(s+1).$$
Set
\begin{equation}\label{v-Def0}\widetilde{v}(x',t)=u(x',t)-\varphi(x')-c_1(x_0')t,\end{equation}
and
$$\widetilde{v}^{\delta}(z',s)=\delta^{-1}\widetilde{v}(x',t),\quad u^{\delta}(z',s)=\delta^{-1}u(x',t).$$
We emphasize that $\widetilde v$ here is not the function $v$ defined in \eqref{v-Def}.
Then,
\begin{align}\label{new-4}
|\widetilde{v}^{\delta}|_{L^{\infty}(B_{1}(e_{n}))}\leq \delta^{-1}(Ct^2+C|x'-x'_{0}|t)\le C\delta,
\end{align}
and, by (\ref{new-1}),
\begin{align*}
|u^{\delta}|_{L^{\infty}(B_{1}(e_{n}))}\leq C.
\end{align*}
Here and hereafter, the constant $C$ is independent of $\delta$.
Note, in $ B_{1}(e_{n}),$
\begin{align}\label{new-2}
\Delta{u^{\delta}}-\frac{\partial_iu^{\delta}\partial_ju^{\delta}}{1+|Du^{\delta}|^{2}}\partial_{ij}u^{\delta}
-\frac{n}{s+1}\big[\partial_nu^{\delta}-H(x'_{0}+\delta z',\delta(s+1))\sqrt{1+| Du^{\delta}|^{2}}\big]=0.
\end{align}
This is a mean curvature type equation. The interior gradient estimate implies
\begin{align}\label{continu}
|Du^{\delta}|_{L^{\infty}(B_{7/8}(e_{n}))}\leq C.
\end{align}
(See \cite{GT1983}  or \cite{Han2016Book} for details.)
Then, \eqref{new-2} is uniformly elliptic in $B_{7/8}(e_{n}).$
We now write (\ref{new-2}) as
\begin{align*}
\partial_{i}\bigg(\frac{\partial_iu^{\delta}}{\sqrt{1+|Du^{\delta}|^{2}}}\bigg)
-\frac{n}{(s+1)\sqrt{1+|Du^{\delta}|^{2}}}\partial_nu^{\delta}+\frac{n}{s+1}H(x'_{0}+\delta z',\delta(s+1))=0.
\end{align*}
Fixing $1\le k\le n$ and differentiating with respect to $z_k$, we have
\begin{align*}
\partial_{i}&\bigg(\bigg(\delta_{ij}-\frac{\partial_iu^{\delta}\partial_ju^\delta}{\sqrt{1+|Du^{\delta}|^{2}}}\bigg)
\partial_j(\partial_ku^\delta)\bigg)
-\frac{n}{(s+1)\sqrt{1+|Du^{\delta}|^{2}}}\partial_n(\partial_ku^{\delta})\\
&\qquad+\frac{n\partial_iu^\delta\partial_nu^\delta}{(s+1)(1+|Du^{\delta}|^{2})^{3/2}}\partial_i(\partial_ku^{\delta})
+\frac{n}{s+1}\partial_kH=0.
\end{align*}
Hence, $\partial_ku^\delta$ is a solution of some uniformly elliptic linear equation of divergence form
in $B_{7/8}(e_n)$, with bounded coefficients and bounded nonhomogenuous term by \eqref{continu}.
Therefore, by interior estimates due to de Giorgi and Moser as in \cite{GT1983}, for some
$\alpha\in (0,1)$,
\begin{align}\label{new-3}
|Du^{\delta}|_{C^{\alpha}(B_{3/4}(e_{n}))}\leq C,
\end{align}
where $C$ is a positive constant depending only on
$n$, $|\varphi|_{C^{2}}$, $|H|_{C^{1}}$ and $|Du^{\delta}|_{L^{\infty}(B_{7/8}(e_{n}))}$,
independent of $\delta.$  We can now apply interior $C^{1,\alpha}$-estimates to \eqref{new-2}
and obtain \eqref{new-3} for any $\alpha\in (0,1)$.

A simple calculation yields, for $a\neq n,$
\begin{align}\label{d1}\begin{split}
\partial_a\widetilde{  v}^\delta&=\partial_au^\delta-\partial_a\varphi(x'),\\
\partial_n\widetilde{  v}^\delta&=\partial_nu^\delta-c_1(x_0'),\\
\partial_{ij}\widetilde{  v}^\delta&= \partial_{ij}u^\delta-\delta \partial_{ij}\varphi(x').\end{split}
\end{align}
We now substitute \eqref{d1} in \eqref{new-2}. First, we consider
\begin{align*}
\partial_nu^{\delta}-H(x'_{0}+\delta z',\delta(s+1))\sqrt{1+| Du^{\delta}|^{2}}
= \partial_n\widetilde{  v}^\delta+D_1+D_2+H(x_0',0)D_3,
\end{align*}
where
\begin{align*}
D_1 &=c_1(x_0')-H(x'_{0},0)\sqrt{1+|\partial_n u^{\delta}|^{2}
+|  D_{x'}\widetilde{  v}^{\delta}|^{2}+|  D_{x'}\varphi|^{2}(x_0')+2 D_{x'}\widetilde{  v}^{\delta} D_{x'}\varphi(x_0')},\\
D_2&=\big(H(x'_{0},0)-H(x'_{0}+\delta z',\delta(s+1))\big)\sqrt{1+| Du^{\delta}|^{2}},\\
D_3 &=\sqrt{1+|\partial_n u^{\delta}|^{2}
+|  D_{x'}\widetilde{  v}^{\delta}|^{2}+|  D_{x'}\varphi|^{2}(x_0')+2 D_{x'}\widetilde{  v}^{\delta} D_{x'}\varphi(x_0')}\\
&\qquad-\sqrt{1+|\partial_n u^{\delta}|^{2}
+|  D_{x'}\widetilde{  v}^{\delta}|^{2}+|  D_{x'}\varphi|^{2}(x')+2 D_{x'}\widetilde{  v}^{\delta} D_{x'}\varphi(x')}.
\end{align*}
Set
$$
h=(\partial_n \widetilde{  v}^{\delta})^{2}+2\partial_n\widetilde{  v}^{\delta}c_1(x_0')
+|  D_{x'}\widetilde{  v}^{\delta}|^{2}+2 D_{x'}\widetilde{  v}^{\delta} D_{x'}\varphi(x_0'),$$
and
\begin{align*}
h_a&=-\frac{\partial_a\widetilde{v}+2\partial_a\varphi(x_0')}{2(1+|D_{x'}\varphi|^2(x_0')+c_1^2(x_0'))},\\
h_n&=-\frac{\partial_n\widetilde{v}+2c_1(x_0')}{2(1+|D_{x'}\varphi|^2(x_0')+c_1^2(x_0'))}.\end{align*}
Then,
\begin{align*}
D_1&=c_1(x_0')\bigg(1-\sqrt{1+\frac{h}{1+|D_{x'}\varphi|^2(x_0')+c_1^2(x_0')}}\bigg)\\
&=c_1(x_0')\int_0^1\bigg[1+\frac{sh}{1+|D_{x'}\varphi|^2(x_0')+c_1^2(x_0')}\bigg]^{-1/2}ds\cdot h_i\partial_i\widetilde{v}.
\end{align*}
Note $r$ is small and, by \eqref{d1} and \eqref{continu},
\begin{align*}
1+\frac{sh}{1+|D_{x'}\varphi|^2(x_0')+c_1^2(x_0')}&=(1-s)+s(1+\frac{h}{1+|D_{x'}\varphi|^2(x_0')+c_1^2(x_0')})\\
&\ge(1-s)+s\frac{1+| Du^{\delta}(x',t)|^{2}-C|x'-x_0'|}{1+|D_{x'}\varphi|^2(x_0')+c_1^2(x_0')},
\end{align*}
where $C$ is a positive constant depending only on $|Du^{\delta}|_{L^{\infty}(B_{7/8}(e_{n}))}$, $|\varphi|_{C^{2}}$,
$|H|_{C^{1}}$ and $n.$
By \eqref{new-3}, \eqref{d1} and \eqref{continu}, we obtain
\begin{align*}
c_1(x_0')\int_0^1\big[1+\frac{sh}{1+|D_{x'}\varphi|^2(x_0')+c_1^2(x_0')}\big]^{-1/2}ds\cdot h_i\in C^{\alpha}(B_{3/4}(e_{n})),\end{align*}
and
$$|D_2|\le C\delta,\quad
|D_3|\le C|x'-x_0'|\le C\delta.$$
Hence, \eqref{aij} has the form
\begin{align*}
A_{ij}(Du^{\delta})\partial_{ij}\widetilde{v}^{\delta}+b_{i}^{\delta}(z',s)\partial_i\widetilde{v}^{\delta}+
\delta f^{\delta}(z',s)=0\quad \text{in } B_{{3}/{4}}(e_{n}),
\end{align*}
where $(A_{ij})$ is uniformly elliptic by \eqref{continu} and
$b_{i}^{\delta}$ and $f^{\delta}$ are bounded by
constants depending only on $n$, $|\varphi|_{C^{2}},$ $|u|_{L^{\infty}}$ and
$|H|_{C^{1}}.$
By \eqref{new-4}, \eqref{new-3} and the interior $C^{1,\beta}$-estimates, we get, for any $\beta\in (0,1)$,
\begin{align*}
|\widetilde{v}^{\delta}|_{C^{1,\beta}(B_{1/2}(e_{n}))}
\leq C(|\widetilde{v}^{\delta}|_{L^{\infty}(B_{3/4}(e_{n}))}+\delta|f^{\delta}|_{L^{\infty}(B_{3/4}(e_{n}))})\leq C\delta,
\end{align*}
where $C$ is positive constant independent of $\delta.$
Scaling back, we get for $i=1,\cdots,n,$
$$\bigg|\frac{D\widetilde{v}}{t}(x_{0}',t_{0})\bigg|=\bigg|\frac{D\widetilde{v}^{\delta}}{\delta(s+1)}(e_n)\bigg|\leq C.$$
Hence,  for any $t\in(0,r),$
$$
|D\widetilde{v}(x_{0}',t)|\leq Ct, $$
and
$$[\partial_i\widetilde{v}]_{ C^{\beta}( {B}_{{t}/{4}}(x_0',t))}\le C,$$
where $C$ is a positive constant depending only on $n$, $|u|_{L^{\infty}}$, $|\varphi|_{C^{2}}$
and $|H|_{C^{1}}.$

Recall that $\widetilde v$ is defined in \eqref{v-Def0}.
Next, we claim, for any $\beta \in(0,1),$
$$\widetilde{v}\in C^{1,\beta}(\bar {B}_{r}'\times[0,r]).$$
Take an arbitrarily fixed $(x_1',t)\in{B}_{r}'\times(0,r)$ and set
\begin{align*}
\widehat v(x',t)=u(x',t)-\varphi(x')-c_1(x_1')t.
\end{align*}
Then,
\begin{align*}
\partial_a\widehat v(x',t)&=\partial_a\widetilde{v}(x',t),\\
\partial_n\widehat v(x',t)&=\partial_n\widetilde{v}(x',t)+c_1(x_0')-c_1(x_1').
\end{align*}
Therefore,  for $i=1,\cdots,n,$
\begin{align*}
[\partial_i\widetilde{v}]_{ C^{\beta}( {B}_{{t}/{4}}(x_1',t))}&=[\partial_i\widehat v]_{ C^{\beta}( {B}_{{t}/{4}}(x_1',t))}\le C,\\
|\partial_a\widetilde{v}(x_1',t)|&=|\partial_a\widehat v(x_1',t)|\le Ct,\\
|\partial_n\widetilde{v}(x_1',t)-(c_1(x_1')-c_1(x_0'))|&=|\partial_n\widehat v(x_1',t)|\le Ct,
\end{align*}
where $C$ is independent of $x'_1.$ Since the choice of $x_1'$ is arbitrary, we have,
for any $(x',t)\in {B}_{r}'\times(0,r),$
\begin{align*}
[\partial_i\widetilde{v}]_{ C^{\beta}( {B}_{{t}/{4}}(x',t))}&\le C,\\
|\partial_a\widetilde{v}(x',t)|&\le Ct,\\
|\partial_n\widetilde{v}(x',t)-(c_1(x')-c_1(x_0'))|&\le Ct.
\end{align*}
Note $c_1-c_1(x_0')\in C^1(B_1').$ Then, the claim holds.
Hence, by the definition of $\widetilde{v},$ we have
$u\in C^{1,\beta}(\bar {B}_{r}'\times[0,r]) ,$ for any $\beta\in (0,1).$

Now we compare $Dv$ and $D\widetilde{v}.$ Note
\begin{align*}
\partial_a v&=\partial_a\widetilde{v}-\partial_ac_1(x')t,\\
\partial_nv&=\partial_n\widetilde{v}+c_1(x_0')-c_1(x').
\end{align*}
Therefore, if $\varphi\in C^{2}(B_1')$ and $H\in C^{1}(\bar B_1^+),$ then $v \in C^{1}(\bar {B}_{r}'\times[0,r])$;
if $\varphi\in C^{2,{\alpha}}(B_1')$ and $H\in C^{1,{\alpha}}(\bar B_1^+)$ for some $\alpha\in (0,1)$,
then $v \in C^{1,{\alpha}}(\bar {B}_{r}'\times[0,r])$.
Moreover,
\begin{align*}
|Dv-D\widetilde{v}|\le Ct+C|x'-x_0'|.
\end{align*}
Evaluating at $x'=x_0',$ we have $|Dv(x_0',t)|\le Ct.$ Note that $x_0'$ is chosen arbitrarily.
\end{proof}




We now discuss higher tangential regularity of $v$.

\begin{theorem}\label{lemma-d-order 3}
Assume $\varphi\in C^{2,\alpha}(B_{1}')$ and $H\in C^{1,\alpha}(\bar B_1^+)$, with
$|H|<1$ in $\bar B_1^+$,
for some $\alpha \in(0,1)$. Let
$ u \in C(\bar{B}_{1}^{+})\bigcap C^{2,\alpha}(B_{1}^{+})$ be a solution of
\eqref{eq-Intro-Equation}-\eqref{eq-Intro-BoundaryValue} and $v$ be defined as in
\eqref{v-Def}.
Then, there exists a constant $r\in(0,1)$ such that
\begin{align}\label{t-v3}\frac{v}{t^{2}},\frac{Dv}{t},D^{2}(u-\varphi)
\in C^{\alpha}(\bar{B}_{r}'\times[0,r]),\end{align}
and
\begin{align}\label{t-1-v3}\frac{v}{t},
\frac{v^{2}}{t^{3}},\frac{vv_{t}}{t^{2}},
\frac{v_{t}^{2}}{t}\in C^{1,\alpha}(\bar{B}_{r}'\times[0,r]).\end{align}
\end{theorem}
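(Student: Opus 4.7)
The plan is to upgrade the scaling–Schauder argument of Lemma \ref{lemma-d-order 2} from the $C^{1,\alpha}$ level to the $C^{2,\alpha}$ level, after first sharpening the pointwise bound of Lemma \ref{t2} by one order via the maximum principle.

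First, I would establish the second-order pointwise expansion
$$|u(x',t)-\varphi(x')-c_1(x_0')t-c_2(x_0')t^2|\le C(|x'-x_0'|^2+t)^{1+\alpha/2}\cdot t$$
for each $x_0'\in B_r'$, which in particular yields $|v(x_0',t)-c_2(x_0')t^2|\le Ct^{2+\alpha}$. This is proved by the barrier method of Theorem \ref{thrm-order n+1}, carried out with a perturbation of order $2+\alpha$ instead of $n+1$. The crucial point is that $c_2(x_0')$, given explicitly by Lemma \ref{lemma-FormalExpansion}, is precisely the choice that cancels the $O(t^0)$ contribution to $tQ(\varphi+c_1(x_0')t+c_2(x_0')t^2)$, leaving a residual of size $C(|x'-x_0'|^\alpha+t^\alpha)$. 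A two-term barrier $A(|x'-x_0'|^2+t)^{p}\,t - A(|x'-x_0'|^2+t)^{q}\,t$ with $p<q$ slightly above $1+\alpha/2$ then absorbs this residual through the maximum principle, exactly as in Theorem \ref{thrm-order n+1}. Under the hypotheses $\varphi\in C^{2,\alpha}$ and $H\in C^{1,\alpha}$, the explicit formula for $c_2$ shows $c_2\in C^\alpha(B_r')$, so the bound says $v/t^2\to c_2(x')$ with Hölder rate $t^\alpha$.

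Next, I would feed this refined pointwise bound into a scaling argument at the $C^{2,\alpha}$ level. Fix $(x_0',2\delta)\in B_r'\times(0,r)$, set $\widetilde v(x',t)=u(x',t)-\varphi(x')-c_1(x_0')t-c_2(x_0')t^2$, and rescale
$$\widetilde v^\delta(z',s)=\delta^{-2-\alpha}\widetilde v(x_0'+\delta z',\delta(s+1)),\quad u^\delta(z',s)=\delta^{-1}u(x_0'+\delta z',\delta(s+1)).$$
The first step gives $|\widetilde v^\delta|_{L^\infty(B_1(e_n))}\le C$ uniformly in $\delta$. Mimicking the derivation in Lemma \ref{lemma-d-order 2}, $\widetilde v^\delta$ satisfies a linear uniformly elliptic equation whose coefficients lie in $C^\alpha$ and whose right-hand side is bounded in $C^\alpha$, each with $\delta$-uniform bounds — the inclusion of the $c_2(x_0')t^2$ term in $\widetilde v$ supplies the cancellation needed for the source term. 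Interior $C^{2,\alpha}$-Schauder then gives $|\widetilde v^\delta|_{C^{2,\alpha}(B_{1/2}(e_n))}\le C$ uniformly. Unscaling yields, on the ball $B_{\delta/2}(x_0',2\delta)$, the pointwise decay $|D^2\widetilde v|\le C\delta^\alpha$ together with a $\delta$-uniform Hölder seminorm $[D^2\widetilde v]_{C^\alpha(B_{\delta/2}(x_0',2\delta))}\le C$. Splitting point-pairs into close and far cases combines these into the conclusion that $D^2\widetilde v$ extends continuously to $t=0$ as a $C^\alpha$ function; \eqref{t-v3} follows by the relations $\partial_{tt}v=\partial_{tt}\widetilde v+2c_2(x_0')$ and the continuity of $c_2$ in $x_0'$.

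Finally, \eqref{t-1-v3} follows from \eqref{t-v3} together with $v\in C^{1,\alpha}$ from Lemma \ref{lemma-d-order 2}. Writing $v/t=t(v/t^2)$ and using $v(x',0)=v_t(x',0)=0$, one computes $\partial_{x_a}(v/t)=(\partial_{x_a}v)/t\in C^\alpha$ from $Dv/t\in C^\alpha$, and $\partial_t(v/t)=v_t/t-v/t^2\in C^\alpha$ from both $C^\alpha$-statements, hence $v/t\in C^{1,\alpha}$. The remaining quantities $v^2/t^3=(v/t)(v/t^2)$, $vv_t/t^2=(v/t)(v_t/t)$, and $v_t^2/t=v_t(v_t/t)$ are handled by the product rule together with their vanishing at $t=0$. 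The principal obstacle is the first step: the $(2+\alpha)$-order barrier construction under only $C^{2,\alpha}$-regularity of $\varphi$ and $C^{1,\alpha}$-regularity of $H$ leaves no slack for Taylor-expanding past first order in $x'-x_0'$, so one must track every Hölder remainder in $tQ$ of the ansatz with care; the same cancellation underlies the uniform $C^\alpha$-bound on the right-hand side in the scaling step.
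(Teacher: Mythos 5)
The overall scaffold (sharpen the pointwise Taylor bound via a barrier, then run scaled interior Schauder) matches the paper, but the first step has a genuine gap that also propagates into the scaling step. The ansatz $\varphi(x')+c_1(x_0')t+c_2(x_0')t^2$ freezes $c_1$ at $x_0'$ without compensating for its variation in $x'$. In the singular part of $Q$, the parenthesis $\partial_n w - H\sqrt{1+|Dw|^2}$ evaluated at $t=0$ equals
\begin{align*}
E(x'):=c_1(x_0')-H_0(x')\sqrt{1+|D_{x'}\varphi(x')|^2 + c_1(x_0')^2},
\end{align*}
with $E(x_0')=0$, but since $c_1$ is only $C^{1,\alpha}$ one merely has $E(x')=O(|x'-x_0'|)$. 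After multiplying by $n/t$ this produces a residual of size $|x'-x_0'|/t$. A barrier $\phi=A f^p t - A f^q t$ with $f=|x'-x_0'|^2+t$ and $p$ slightly above $1+\alpha/2$ contributes to the singular term only $\frac{n}{t}\partial_n\phi$, and at $t=0$ one has $\partial_n\phi\to A|x'-x_0'|^{2p}\ll |x'-x_0'|$ for any fixed $A$. So the barrier cannot be a supersolution over all of $B_r'(x_0')\times(0,\delta)$, and in fact the claimed pointwise bound is false: the discrepancy $(c_1(x')-c_1(x_0'))t$ alone is of size $|x'-x_0'|\,t$, which already exceeds $(|x'-x_0'|^2+t)^{1+\alpha/2}t$ when $|x'-x_0'|\le t$. (There is also an internal inconsistency: your displayed bound at $x'=x_0'$ gives $Ct^{2+\alpha/2}$, not $Ct^{2+\alpha}$, and rescaling it by $\delta^{-2-\alpha}$ leaves $|\widetilde v^\delta|\sim\delta^{-\alpha/2}$ unbounded.)

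The paper avoids this by two devices that you omit. First, it compares $u$ not against $\varphi+c_1(x_0')t+c_2(x_0')t^2$ but against $\varphi+c_1(x_0')t+D_{x'}c_1(x_0')(x'-x_0')t+c_2(x_0')t^2$; including the first-order Taylor polynomial of $c_1$ promotes the bad factor $c_1(x')-c_1(x_0')=O(|x'-x_0'|)$ to $h(x')=c_1(x')-c_1(x_0')-D_{x'}c_1(x_0')(x'-x_0')=O(|x'-x_0'|^{1+\alpha})$, which is exactly where the $C^{1,\alpha}$ regularity of $c_1$ is used. Second, the barrier must produce, from the singular drift $\frac{b_n}{t}\partial_n$, a negative term of size $|x'-x_0'|^{1+\alpha}/t$ to absorb $\widetilde h=\frac{n}{t}h(x')\widehat h$; the paper's $\psi=\mu_1 t(\mu_3|x'|^2+t^2)^{(\alpha+1)/2}+\mu_2 t^{2+\alpha}$ does exactly this, since $\partial_n\psi\to\mu_1\mu_3^{(\alpha+1)/2}|x'|^{1+\alpha}$ as $t\to 0$, while your $\phi$ gives $\partial_n\phi\sim A|x'-x_0'|^{2p}$, of the wrong order. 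The paper also passes through the linearized equation \eqref{tildav} so that the troublesome contribution is isolated as $\widetilde h$, which is needed both for the barrier step and to verify the uniform $C^\alpha$ bound on the right-hand side in the Schauder step; your sketch skips this reformulation.
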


\begin{proof} Note that (\ref{t-1-v3}) follows from (\ref{t-v3}) easily.
Take any $x_0'\in B_{r}',$ for some small $r,$ and consider in $B_{r}'(x_0')\times(0,r).$
Set
\begin{equation}\label{v-Def1}
\widetilde{v}(x',t)=u(x',t)-\varphi(x')-c_1(x_0')t-D_{x'}c_1(x_0')(x'-x_0')t,\end{equation}
and
\begin{equation}\label{h-Def1}
h(x')=c_1(x')-c_1(x_0')-D_{x'}c_1(x_0')(x'-x_0').
\end{equation}
Then, for $a,b\neq n,$
\begin{align*}
\partial_n\widetilde{v}&=\partial_n u-c_1(x')+h(x'),\\
\partial_a\widetilde{v}&=\partial_au-\partial_a\varphi-\partial_ac_1(x_0')t,\end{align*}
and
\begin{align*}
\partial_{ab}\widetilde{v}&=\partial_{ab}u-\partial_{ab}\varphi,\\
\partial_{an}\widetilde{v}&=\partial_{an}u-\partial_ac_1(x_0'),\\
\partial_{nn} \widetilde{v}&=\partial_{nn}u.
\end{align*}
By a simple substitution in \eqref{aij} and a straightforward calculation, we have
\begin{align}\label{tildav}
A_{ij}\partial_{ij}\widetilde{v}+\frac{b_i}{t}\partial_i\widetilde{v}+f+\widetilde{h}=0,
\end{align}
where, for $a\neq n,$
\begin{align*}
b_a&=ng\big[\partial_a\widetilde{v}+2(\partial_a\varphi+\partial_ac_1(x_0')t)\big],\\
b_n&=-n+ng\big[\partial_n\widetilde{v}+2(c_1(x')-h(x'))\big],\\
f&=A_{ab}\partial _{ab}\varphi+2A_{an}\partial_ac_1(x_0')+ ng
\big[|D_{x'}c_1|^2(x_0')\cdot t+2D_{x'}\varphi (x')D_{x'}c_1(x_0')\big]\\
&\qquad+\frac{n}{t}(H-H_0)\sqrt{1+[\partial_n\widetilde{v}+c_1(x')-h(x')]^2
+|\nabla_{x'}(\widetilde{v}+\varphi)+\nabla_{x'}c_1(x_0')t|^2},\end{align*}
and
\begin{align*}\widetilde{h}&=\frac{n}{t}h(x')\widehat{h},\\
 \widehat{h}&=1+g(h(x')-2c_1(x')),\end{align*}
with
\begin{align*} g&=c_1(x')\int_0^1\frac{1}{2[1+|D_{x'}\varphi|^2+c_1^2(x')]\sqrt{1+sw[1+|D_{x'}\varphi|^2+c_1^2(x')]^{-1}}}ds,\\
w&=\big[|D_{x'}c_1|^2(x_0')\cdot t+2D_{x'}\varphi (x')D_{x'}c_1(x_0')\big]t+h(x')[h(x')-2c_1(x')]\\
&\qquad+\sum_a \partial_a\widetilde{v}\big[\partial_a\widetilde{v}+2(\partial_a\varphi+\partial_ac_1(x_0')t)\big]
+\partial_n\widetilde{v}\big[\partial_n\widetilde{v}+2(c_1(x')-h(x'))\big].
\end{align*}
The definition of $h$ in \eqref{h-Def1} implies
$h\in C^{1,\alpha}(\bar B_{1/2}')$
and
$$|h(x')|\le C|x'-x_0'|^{1+\alpha}.$$
By Lemma \ref{lemma-d-order 2},
we have $A_{ij},b_i,f,\widehat{h}\in C^\alpha(\bar B_{r_{0}}'\times[0,r_{0}]),$ for some small $r_0$.
Here we point out $\frac{n}{t}(H-H_0)\in C^\alpha(\bar B_{r_{0}}'\times[0,r_{0}])$ can be easily derived from $H\in C^{1,\alpha}(\bar{B}_1^+).$
Hence,
\begin{align}\label{h/t}\begin{split}
|b_n(x',0)-b_n(x_0',0)|&\le C|x'-x_0'|^\alpha,\\
|\widetilde{h}(x',t)|&\le \frac{C}{t}|x'-x_0'|^{1+\alpha}.\end{split}
\end{align}
By Lemma \ref{lemma-d-order 2},  $Dv=0$ on $\{t=0\}$. Hence, with \eqref{v-Eq23}, we have
\begin{align}\label{p3}
\begin{split}
A_{nn}(x',0)&=1-H_0^{2}(x'),\\
b_n(x_0',0)&=-n(1-H_0^{2}(x_0')).
\end{split}
\end{align}
Then, for sufficiently small constants $r$ and $\varepsilon_0$, we have, in
$B_{r}'\times(0,r)$,
\begin{align}\label{negative3}
\begin{split}
(1+\alpha)A_{nn}+b_n&\leq -b_{0},\\
3A_{nn}+b_n\bigg(1+\frac{1-\varepsilon_0}{1+\alpha}\bigg)& \leq-b_{0},
\end{split}
\end{align}
for some positive constant $b_{0}$.

We claim, for some $c_{2} \in C^{\alpha}(\bar{B}'_{r})$
and any $(x',t) \in B_{r}'\times(0,r) $,
\begin{align}\label{t2alpha3}
|v(x',t)-c_{2}(x')t^{2}| \leq Ct^{2+\alpha}.
\end{align}
The expression of $c_{2}$ will be given in the proof below.
We point out that since $c_{2}$ is only $C^{\alpha},$ we cannot differentiate $c_{2}.$
Set
\begin{align*}
L(w)&=A_{ij}\partial_{ij}w+\frac{b_i}{t}\partial_{i}w,\\
Q(w)&=L(w)+f+\widetilde{h},
\end{align*}
where $A_{ij}, b_i,f$ and $\widetilde{h}$ are evaluated at $x$ and $Dv$.
By \eqref{tildav}, we have $Q(\widetilde{v})=0.$ Without loss of generality, we assume $x_0'=0.$
Set
\begin{align*}
\psi=\mu_{1}t(\mu_3|x'|^{2}+t^{2})^{\frac{\alpha+1}{2}}+\mu_{2}t^{2+\alpha}, \end{align*}
and
\begin{align*}
\overline{ v}  =c_{2}(0)t^{2}+\psi,
\end{align*}
where $\mu_{1}$, $\mu_{2}$, $\mu_3$ and $c_2(0)$ are constants to be determined. We will
choose $\mu_1$ and $\mu_2$ large
and $\mu_{3}$  small. Then,
\begin{align}\label{eq-equation-v-overline}
Q( \overline{ v})=L(\psi)+f+\widetilde{h}+2(A_{nn}+b_n)c_{2}(0).
\end{align}
Set
\begin{align}\label{c2-03}
c_{2}(0)=-\frac{f}{2(A_{nn}+b_n)}(0).
\end{align}
A straightforward calculation yields
\begin{align*}
L(\psi)&=\mu_{1}t(\mu_3|x'|^{2}+t^{2})^{\frac{\alpha-1}{2}}(\alpha+1)
\bigg[3A_{nn}+b_n\bigg(1+\frac{1}{\alpha+1}\frac{\mu_3|x'|^{2}+t^{2}}{t^2}\bigg)\\
&\qquad+\mu_3(2A_{an}+b_a)\frac{x_{a}}{t}+\mu_3\delta_{ab}+\frac{\alpha-1}{\mu_3|x'|^{2}
+t^{2}}[A_{nn}t^{2}+2\mu_3A_{an}x_{a}t+\mu_3^2A_{ab}x_{a}x_{b}]\bigg]\\
&\qquad+\mu_{2}t^{\alpha}[(\alpha+1)(\alpha+2)A_{nn}+(\alpha+2)b_n].
\end{align*}
In the above expression, we write
$$\frac{1}{\alpha+1}=\frac{1-\varepsilon_0}{\alpha+1}+\frac{\varepsilon_0}{\alpha+1},$$
for some $\varepsilon_0$ in \eqref{negative3}.
Note that
$$|A_{nn}(x',t)|\le 1,$$
and all $A_{ij}$ and $b_i$ are bounded. By \eqref{h/t}, \eqref{p3}, \eqref{negative3} and the
Cauchy inequality, we have
\begin{align*}L(\psi)&\le\mu_{1}t(\mu_3|x'|^{2}+t^{2})^{\frac{\alpha-1}{2}}(\alpha+1)
\bigg[-b_0+C\sqrt{\mu_3}+\frac{(1-\alpha)t^2}{\mu_3|x'|^{2}
+t^{2}}\bigg] \\
&\qquad-b_0\mu_{2}t^{\alpha} -a_0\mu_{1}\frac{1}{t}(\mu_3|x'|^{2}+t^{2})^{\frac{\alpha+1}{2}},\end{align*}
where $a_0$ is a positive constant independent of $\mu_3$.
We first fix $\mu_3$ such that $C\sqrt{\mu_3}\le b_0/4$.
Next, take $M>0$ to be determined. For any $(x',t)\in B_{r}'\times(0,r)$
with $|x'| \geq Mt,$ we have
$$L(\psi)\le \mu_{1}t(\mu_3|x'|^{2}+t^{2})^{\frac{\alpha-1}{2}}(\alpha+1)
\bigg[-\frac34b_0+\frac{1-\alpha}{\mu_3M^{2}
+1}\bigg]-\frac{1}{t}a_0\mu_{1}(\mu_3|x'|^{2}+t^{2})^{\frac{\alpha+1}{2}}.$$
Hence, for $M$ sufficiently large, we have, for any $(x',t)\in B_{r}'\times(0,r)$
with $|x'| \geq Mt,$
\begin{align}\label{mu1.13}
L(\psi)\le -\frac{b_0}{2}\mu_{1}t(\mu_3|x'|^{2}+t^{2})^{\frac{\alpha-1}{2}}(\alpha+1)
-\frac{1}{t}a_0\mu_{1}\mu_3^{\frac{\alpha+1}{2}}(|x'|^{2}+t^{2})^{\frac{\alpha+1}{2}}.\end{align}
On the other hand, we have, for any $(x',t)\in B_{r}'\times(0,r)$
with $|x'| \le Mt,$
$$L(\psi)\le \mu_{1}t^\alpha(\mu_3M^{2}+1)^{\frac{\alpha-1}{2}}(\alpha+1)(1-\alpha)
-b_0\mu_{2}t^{\alpha}-\frac{1}{t}a_0\mu_{1}(\mu_3|x'|^{2}+t^{2})^{\frac{\alpha+1}{2}},$$
Hence, for each $\mu_1$, we can choose $\mu_2$ as a big multiple of $\mu_1$ and obtain,
for any $(x',t)\in B_{r}'\times(0,r)$
with $|x'| \le Mt,$
\begin{align}\label{mu1.23}L(\psi)\le -Bb_0\mu_{1}
(|x'|^{2}+t^{2})^{\frac{\alpha}{2}}-\frac{1}{t}a_0\mu_{1}\mu_3^{\frac{\alpha+1}{2}}(|x'|^{2}+t^{2})^{\frac{\alpha+1}{2}},\end{align}
for some positive constant $B$.
By combining \eqref{mu1.13} and \eqref{mu1.23},  we obtain
\begin{align}\label{mu3}
L(\psi)\leq -C_*\mu_{1}t(|x'|^{2}+t^{2})^{\frac{\alpha-1}{2}}
-\frac{1}{t}a_0\mu_{1}\mu_3^{\frac{\alpha+1}2}(|x'|^{2}+t^{2})^{\frac{\alpha+1}{2}},
\end{align}
and, with \eqref{eq-equation-v-overline}
and \eqref{c2-03},
\begin{align*}
Q(\overline{v})&\leq -C_*\mu_{1}t(|x'|^{2}+t^{2})^{\frac{\alpha-1}{2}}
-a_0\mu_{1}\mu_3^{\frac{\alpha+1}{2}}\frac{(|x'|^{2}+t^{2})^{\frac{\alpha+1}{2}}}{t}
+C(|x'|^\alpha+t^\alpha)+
C\frac{|x'|^{1+\alpha}}{t}\\&\leq 0,
\end{align*}
by choosing $\mu_{1}\geq C/(a_0\mu_3^{\frac{\alpha+1}{2}})$.
Next, note
\begin{align*}
\widetilde{v}-c_2(0)t^2\leq v+C|x'|^{\alpha+1}t+|c_2(0)|t^2
\le Ct^2+C|x'|^{\alpha+1}t.
\end{align*}
In order to have $\widetilde{v}\leq \overline{v}$ on $\partial(B_{r}'\times(0,r)),$ we take
\begin{align*}
\mu_1\ge \left(C+\frac{C}{r^{\alpha}}\right)\mu_{3}^{-\frac{\alpha+1}{2}}.
\end{align*}
In summary, we have $Q(\overline{v})\le Q(\widetilde{v})$ in $B_{r}'\times(0,r)$ and
$\widetilde{v}\leq \overline{v}$ on $\partial(B_{r}'\times(0,r))$.
By the maximum principle, we get $\widetilde{v}\leq \overline{v}$ in $B_{r}'\times(0,r),$ and hence
$$\widetilde{v}\leq c_{2}(0)t^{2}+\psi\quad \text{in } B_{r}'\times(0,r).$$
Similarly, we have
$$\widetilde{v}\geq c_{2}(0)t^{2}-\psi\quad \text{in } B_{r}'\times(0,r).$$
By taking $x'= 0,$ we have (\ref{t2alpha3}) for $x'= 0.$
We can prove (\ref{t2alpha3}) for any $(x',t)\in B_{r}'\times(0,r)$ by a similar method.
Instead of (\ref{c2-03}), we set
\begin{align*}
c_{2}(x')=-\frac{f}{2(A_{nn}+b_n)}(x',0).
\end{align*}
Note $c_2\in C^{\alpha}(B_{r}').$ 

With \eqref{t2alpha3}, we will prove \eqref{t-v3}.
We take any $(x_{0}',t_{0})\in  B_{{r}/{2}}'\times(0,{r}/{2})$ and
set $t_{0}=2\delta.$ With $e_{n}=(0,1),$ consider the transform
$T:B_{1}(e_{n})\rightarrow B_{r}'\times(0,r)$ given by
$$x'=x'_{0}+\delta z',\quad t=\delta(s+1).$$
Set
$$\widetilde{v}^{\delta}(z',s)=\delta^{-2}\big[u(x',t)-\varphi(x')-c_1(x_0')t-D_{x'}c_1(x_0')(x'-x_0')t-c_{2}(x_{0}')t^{2}\big].$$
Then, by (\ref{t2alpha3}) and \eqref{tildav},
\begin{align}\label{vr1}
|\widetilde{v}^{\delta}|_{L^{\infty}(\bar{B}_{1}(e_{n}))}\le\delta^{-2}C\big[t^{2+\alpha}+|x'-x_0'|^{1+\alpha}t
+|x'-x_0'|^{\alpha}t^2\big]
\le C\delta^\alpha,
\end{align}
and
\begin{align}\label{vdel}
A_{ij}\partial_{ij}\widetilde{v}^{\delta}+\frac{b_i}{s+1}\partial_{i}\widetilde{v}^{\delta}
+g(x'_{0}+\delta z',\delta(s+1))+\widetilde{h}&=0,
\end{align}
where
$$g=f+2(A_{nn}+b_n)c_{2}(x_{0}').$$ Note
$$|g(x'_{0}+\delta z',\delta(s+1))|_{C^{\alpha}(\bar{B}_{1}(e_{n}))}\leq
C\delta^\alpha,$$
and by \eqref{h/t},
\begin{align*}
|\widetilde{h}|_{L^{\infty}(\bar{B}_{1}(e_{n}))}\le C\delta^\alpha,
\end{align*}
Since $h\in C^{1,\alpha},$ we can check easily
\begin{align}\label{h/t1}
\left[\frac{h}{t}\right]_{C^{\alpha}(\bar{B}_{1}(e_{n}))}\le C\delta^\alpha,\quad
[\widetilde{h}]_{C^{\alpha}(\bar{B}_{1}(e_{n}))}\le C\delta^\alpha.
\end{align}
By the Schauder estimate, we get $\widetilde{v}^{\delta}\in C^{2,\alpha}(\bar{B}_{1/2}(e_{n}))$ and
\begin{align*}|\widetilde{v}^{\delta}|_{C^{2,\alpha}(\bar{B}_{1/2}(e_{n}))}
&\leq C\left\{|\widetilde{v}^{\delta}|_{L^{\infty}(\bar{B}_{1}(e_{n}))}+|g(x'_{0}
+\delta z',\delta(s+1))|_{C^{\alpha}(\bar{B}_{1}(e_{n}))}+|\widetilde{h}|_{C^{\alpha}(\bar{B}_{1}(e_{n}))}\right\}\\
&\leq C \delta^{\alpha}.\end{align*}
Now we can scale back and note that $x_0'\in B_{{r}/{2}}'$ is arbitrary.
By \eqref{h/t1} and the definition of $h$,  we obtain, for any $(x',t)\in B_{{r}/{2}}'\times(0,{r}/{2}),$
\begin{align*}
D^2(u-\varphi), \frac1tD\big(u-\varphi-c_1(x')t\big),\frac1{t^2}\big(u-\varphi-c_1(x')t\big)\in C^{\alpha}(B_{t/4}(x',t)),
\end{align*}
and
\begin{align*}
&|\partial_{nn}(u-\varphi)-2c_2(x')|+|\partial_{ab}(u-\varphi)|+|\partial_{an}(u-\varphi)-\partial_a c_1(x')|\le Ct^{\alpha},\\
&\bigg|\frac1t\partial_a\big(u-\varphi-c_1(x')t\big)\bigg|+\bigg|\frac1t\partial_n\big(u-\varphi-c_1(x')t\big)-2c_2(x')\bigg|\le Ct^{\alpha},
    \\&\bigg|\frac1{t^2}\big(u-\varphi-c_1(x')t-c_2(x')t^2\big)\bigg|\le Ct^{\alpha}.
\end{align*}
Hence, we get \eqref{t-v3}.
\end{proof}

We now prove a general result.

\begin{theorem}\label{lemma-d-order l}
Assume $\varphi\in C^{l,\alpha}(B_{1}')$ and $H\in C^{l-1, \alpha}(\bar B_1^+)$,
with $|H|<1$ in $\bar B_1^+$,
for some $l\geq 3$ and $\alpha \in(0,1)$. Let
$ u \in C(\bar{B}_{1}^{+})\bigcap C^{l,\alpha}(B_{1}^{+})$ be a solution of
\eqref{eq-Intro-Equation} and \eqref{eq-Intro-BoundaryValue}  and $v$ be defined as in
\eqref{v-Def}.
Then, there exists a constant $r\in(0,1)$ such that, for $j=0,1,\cdots,l-2,$
\begin{align}\label{t-v}\frac{D^{j}_{x'}v}{t^{2}},\frac{D^{j}_{x'}Dv}{t},D^{j}_{x'}D^{2}(u-\varphi)
\in C^{\alpha}(\bar{B}_{r}'\times[0,r]),\end{align}
and
\begin{align}\label{t-1-v}\frac{D^{j}_{x'}v}{t},
\frac{D^{j}_{x'}(v^{2})}{t^{3}},\frac{D^{j}_{x'}(vv_{t})}{t^{2}},
\frac{D^{j}_{x'}(v_{t}^{2})}{t}\in C^{1,\alpha}(\bar{B}_{r}'\times[0,r]).\end{align}
\end{theorem}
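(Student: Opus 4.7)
The plan is to prove the statement by induction on $j$, with the base case $j=0$ being Theorem~\ref{lemma-d-order 3}. For the inductive step, assume the conclusions hold for all tangential multi-indices of order less than $j$, where $1 \le j \le l-2$, and fix a multi-index $\beta$ with $|\beta| = j$.

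First I would set up an analog of $\widetilde v$ from \eqref{v-Def1}, suited to the $\beta$-th tangential derivative. At each center $x_0' \in B_r'$ introduce
$$\widetilde v(x', t) = u(x', t) - \sum_{k=0}^{K} P_k^{x_0'}(x'-x_0')\, t^k,$$
where $P_k^{x_0'}$ is the tangential Taylor polynomial of $c_k$ at $x_0'$ truncated at an order chosen so that after differentiating $\beta$ times the residual boundary data and the $1/t$-singular source are both controlled, with $c_0 = \varphi$ and the $c_k$ for $k\ge 1$ supplied by Lemma~\ref{lemma-FormalExpansion}. This mirrors the passage from $v$ in \eqref{v-Def} to $\widetilde v$ in \eqref{v-Def1}, but now with higher-order local terms and higher-order Taylor expansions.

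Next I would differentiate \eqref{aij} tangentially $\beta$ times. By the chain rule, $\partial_{x'}^\beta u$ satisfies a linear equation of exactly the shape \eqref{tildav}, namely
$$A_{ij}(Du)\,\partial_{ij}(\partial_{x'}^\beta \widetilde v) + \frac{\tilde b_i}{t}\,\partial_i(\partial_{x'}^\beta \widetilde v) + \tilde f + \widetilde h = 0,$$
with $A_{ij}$ still uniformly elliptic (by Lemma~\ref{lemma-d-order 2}), with $\tilde b_i$ and $\tilde f$ in $C^\alpha(\bar B_r'\times [0,r])$ by the inductive hypothesis and Lemma~\ref{lemma-FormalExpansion}, and with $|\widetilde h(x', t)| \le C|x'-x_0'|^{1+\alpha}/t$. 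The crucial structural identities $A_{nn}(x',0) = 1 - H_0^2(x')$ and $\tilde b_n(x_0', 0) = -n(1-H_0^2(x_0'))$ persist, since they reflect the linearization of the equation at the boundary and are independent of $\beta$. With the equation in this form, the remainder of the argument mirrors that of Theorem~\ref{lemma-d-order 3}: first apply the barrier
$$\psi = \mu_1 t(\mu_3|x'-x_0'|^2 + t^2)^{(\alpha+1)/2} + \mu_2 t^{2+\alpha}$$
together with the sign conditions \eqref{negative3} to obtain a pointwise bound of the form $|\partial_{x'}^\beta \widetilde v(x_0', t) - c_{2,\beta}(x_0') t^2| \le C t^{2+\alpha}$ with $c_{2,\beta} \in C^\alpha(B_r')$ (identifiable with $\partial_{x'}^\beta c_2$); then run the scaled Schauder argument on $B_1(e_n)$ exactly as in the passage from \eqref{vr1} through \eqref{h/t1}, and scale back to derive \eqref{t-v} at level $j$. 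The estimates \eqref{t-1-v} then follow from \eqref{t-v} and the product rule.

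The main obstacle will be the combinatorial bookkeeping inherent in the two subtractions: selecting the truncation order of each $P_k^{x_0'}$ so that the $1/t$-singularity in $\widetilde h$ is exactly cancelled and no more regularity is lost than necessary, and verifying that the tangentially differentiated coefficients $\tilde b_i$ and source $\tilde f$ land in $C^\alpha$ up to the boundary. Both reductions rely on the recursive structure encoded in Lemma~\ref{lemma-FormalExpansion} together with the inductive control of $v/t^2$, $Dv/t$, $vv_t/t^2$ and $v_t^2/t$ up to order $j-1$, which is precisely the reason those particular quotients are singled out in the statement.
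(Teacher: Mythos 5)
Your overall strategy — induction on the tangential order $j$, differentiate the equation, show the structural identities $A_{nn}(\cdot,0)=1-H_0^2$ and $b_n(\cdot,0)=-n(1-H_0^2)$ persist, run the barrier argument to get $|D_{x'}^{j}v-c_{j,2}t^2|\le Ct^{2+\alpha}$, and conclude by scaled Schauder — is exactly the paper's. But the ansatz you propose for $\widetilde v$ is substantially heavier than the one the paper actually uses, and taken literally it introduces problems that the paper's choice avoids. The paper takes
\[
\widetilde v \;=\; u(x',t)-\varphi(x')-c_1(x_0')\,t-\sum_{|\beta|=1}^{l-1}\frac{1}{\beta!}\,\partial^\beta_{x'}c_1(x_0')(x'-x_0')^\beta\,t,
\]
i.e.\ only the single coefficient $c_1$ is Taylor-expanded (to order $l-1$, so that after $D_{x'}^{j}$ with $j\le l-2$ the residual $h$ is still $O(|x'-x_0'|^{1+\alpha})$), while $u-\varphi(x')$ is left intact. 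Two things go wrong in your more elaborate subtraction. First, if $P_0$ is a genuine Taylor truncation of $\varphi$ rather than $\varphi$ itself, then $\widetilde v|_{t=0}=\varphi-P_0\not\equiv 0$, and the maximum-principle comparison on the bottom $\{t=0\}$ of the cylinder $B_r'(x_0')\times(0,r)$ breaks: both the barrier $\overline v=c_{j,2}(x_0')t^2+\psi$ and $\widetilde v$ are supposed to vanish there, and the Taylor remainder of $\varphi$ has no sign. Second, subtracting $P_k(x'-x_0')t^k$ for $k\ge 2$ is redundant and creates a new verification burden — you would need to show that the formal coefficients $c_k$ of Lemma \ref{lemma-FormalExpansion} agree with the $c_{2}$ of \eqref{c2-03} produced by the linearized operator, which is not established anywhere in the paper and is not needed: the barrier already carries the unknown $t^2$-coefficient in the form $c_{j,2}(x_0')t^2$. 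So the "combinatorial bookkeeping" obstacle you flagged is real, but it is self-inflicted; replacing your ansatz with the paper's simpler $\widetilde v$ removes it and makes your argument match the paper's proof.
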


\begin{proof}
Note that (\ref{t-1-v}) follows from (\ref{t-v}) easily and we only need to prove (\ref{t-v}).
For some small $r,$ we set, for any $x_0'\in B_r',$
\begin{align*}
\widetilde{v}&=u(x',t)-\varphi(x')-c_1(x_0')t-\sum_{|\beta|=1}^{l-1}
\frac{1}{\beta!}\partial^\beta_{x'} c_1(x_0')(x'-x_0')^\beta t.
\end{align*}
Similarly as for \eqref{tildav}, we can derive the equation for $\widetilde{v}$
\begin{align}\label{tildav1}
A_{ij}\partial_{ij}\widetilde{v}+\frac{b_i}{t}\partial_i\widetilde{v}+f+\widetilde{h}=0.
\end{align}
We will prove (\ref{t-v}) by an induction on $l$.
The case $l=2$ follows from (\ref{t-v3}).
For $l\geq 3$, we fix an integer $1\leq k\leq l-2$ and assume (\ref{t-v}) holds for $j=0,1,\cdots,k-1.$
Then, consider the case $j=k.$ By differentiating \eqref{tildav1} by $D_{x'}^k$, we have
$$A_{ij}D_{x'}^k\partial_{ij}\widetilde{v}+\frac{b_i}{t}D_{x'}^k\partial_{i}\widetilde{v}+f+\widetilde{h}=0, $$
with $A_{ij},b_i,f,\widetilde{h}$ satisfying the same properties needed in Lemma \ref{lemma-d-order 3}.
We claim, for some $c_{k,2}\in C^{\alpha}(\bar{B}'_{r})$
and any $(x',t) \in B_{r}'\times(0,r) $,
\begin{align*}
|D_{x'}^{k}v(x',t)-c_{k,2}(x')t^{2}| \leq Ct^{2+\alpha}.
\end{align*} The proof is similar as that of \eqref{t2alpha3} and is omitted.
Then, \eqref{t-v} for $j=k$ follows similarly.
\end{proof}

We point out that Tonegawa \cite{Tonegawa1996MathZ} already
proved the tangential regularity of $v$.
The present form is used in the expansions to be discussed in the next section.

\section{Regularity Along The Normal Direction}\label{sec-N-Regularityies}

In this section, we discuss the regularity along the normal direction.

First, with Theorem \ref{lemma-d-order l}, we rewrite \eqref{aij} as
\begin{align}\label{v-Eq}
A_{nn}(Du)\partial_{nn}v+2A_{an}(Du)\partial_{an}u+A_{ab}(Du)\partial_{ab}u-\frac{n}{t}\partial_nv+N=0\quad\text{in }B_1^+,
\end{align}
where
\begin{align*}N&=\frac{n}{t}\big[H\sqrt{1+|D_{x'}u|^{2}+(\partial_nu)^{2}}
-c_{1}\big].\end{align*}
We write
$$N=N_1+N_2,$$
where
\begin{align*}
N_{1}(x',t)&=\frac{n}{t}(H-H_{0})\sqrt{1+|D_{x'}u|^{2}+(\partial_{n}u)^{2}},\\
N_{2}(x',t)&=\frac{n}{t}\big[H_{0}\sqrt{1+|D_{x'}u|^{2}+(\partial_{n}u)^{2}}-c_{1}\big].
\end{align*}
We note, by \eqref{eq-Expression_c_1},
\begin{align*}
N_2=\frac{n}{t}c_{1}\left(\sqrt{1+\frac{h}{1+|D_{x'}\varphi|^{2}+c_{1}^{2}}}-1\right),\end{align*}
where
\begin{align*}
h=|D_{x'}v+D_{x'}c_{1}t|^{2}+2D_{x'}\varphi D_{x'}(v+c_{1}t)+(\partial_{n}v)^{2}+2\partial_{n}vc_{1}.
\end{align*}
Hence,  we have
$$\frac{h}{1+|D_{x'}\varphi|^2+c_1^2}=2h_i\partial_iv+2h_0t,$$
where
$$\aligned h_a&=\frac{2\partial_a\varphi+\partial_ac_1t+\partial_av/2}{1+|D_{x'}\varphi|^2+c_1^2},\\
h_n&=\frac{c_1+\partial_nv/2}{1+|D_{x'}\varphi|^2+c_1^2},\\
h_0&=\frac{D_{x'}\varphi\cdot D_{x'}c_{1}
+|D_{x'}c_{1}|^2t/{2}}{1+|D_{x'}\varphi|^2+c_1^2}.\endaligned$$
Therefore, we can express $N_2$ 
by
$$\aligned
N_2&=\frac{nc_1}{t}\int_0^1\frac{d\ }{ds}\big[1+\frac{sh}{1+|D_{x'}\varphi|^2+c_1^2}\big]^{1/2}ds\\
&=\frac{nc_1}{t}\int_0^1\bigg[1+\frac{sh}{1+|D_{x'}\varphi|^2+c_1^2}\bigg]^{-1/2}ds\cdot (h_i\partial_iv+h_0t).
\endaligned$$
With such an $N_2$, we can write \eqref{v-Eq} as
\begin{align}\label{vr3}
A_{nn}\partial_{nn}v+2A_{an}\partial_{an}u+A_{ab}\partial_{ab}u+\frac{b_i}{t}\partial_{i}v+f=0,
\end{align}
where $A_{ij}, b_i$ and $f$ are smooth in $x$, $H$, $D_{x'}\varphi$,$D_{x'}H$,$\frac{H-H_{0}}{t}$, $D_{x'}^2\varphi$ and
$Dv$. In particular,
\begin{align}\label{54}\begin{split}
A_{nn}&=\frac{1+|D_{x'}v+D_{x'}\varphi+D_{x'}c_{1}t|^{2}}
{1+|D_{x'}v+D_{x'}\varphi+D_{x'}c_{1}t|^{2}+(\partial_{n}v+c_{1})^{2}},\\
b_n&=-n+\frac{n(c_1^2+c_1\partial_{n}v/2)}{1+|D_{x'}\varphi|^2+c_1^2}
\int_0^1\bigg[1+\frac{sh}{1+|D_{x'}\varphi|^2+c_1^2}\bigg]^{-1/2}ds.\end{split}\end{align}
Then,
$$\partial_{nn}v+\frac{b_n}{tA_{nn}}\partial_nv
=\frac{1}{A_{nn}}\bigg[-2A_{an}\partial_{an}u-A_{ab}\partial_{ab}u-b_a\frac{\partial_av}{t}-f\bigg].$$
By \eqref{d2u}, we have
${b_n}/{A_{nn}}=-n$ on $\{t=0\}$.
Hence,
\begin{equation}\label{eq-p1}\partial_{nn}v-\frac{n}{t}\partial_nv=F,\end{equation}
where
$$F=\frac{1}{A_{nn}}\bigg[-2A_{an}\partial_{an}u-A_{ab}\partial_{ab}u-b_a\frac{\partial_av}{t}-f\bigg]
-\frac{1}{tA_{nn}}(b_n+nA_{nn})\partial_nv.$$
By \eqref{d2u}, we have $(b_n+nA_{nn})(\cdot, 0)=0$.
In fact, a straightforward calculation, with the help of  \eqref{54}, yields
$$b_n+nA_{nn}=h_i\partial_iv+h_0t,$$
where $h_1, \cdots, h_n$ and $h_0$ are smooth functions in
$x$, $H$ $D_{x'}\varphi$, $D_{x'}H$, $D_{x'}^2\varphi$ and $Dv$. Hence,
$$(b_n+nA_{nn})\frac{\partial_nv}{t}=h_a\partial_nv\cdot\frac{\partial_av}{t}+h_n\cdot\frac{v_t^2}{t}+h_0\partial_nv,$$
where the summation for $a$ is from 1 to $n-1$.
Therefore,
\begin{equation}\label{eq-p2}F=2\widetilde A_{an}\partial_{an}u+\widetilde A_{ab}\partial_{ab}u
+\widetilde b_a\frac{\partial_av}{t}+\widetilde b_n\frac{v_t^2}{t}+\widetilde f,\end{equation}
where $\widetilde A_{an}$, $\widetilde A_{ab}$,
$\widetilde b_a$, $\widetilde b_n$ and $\widetilde f$ are smooth functions
in $x$, $H$, $D_{x'}\varphi$,$D_{x'}H$,$\frac{H-H_{0}}{t}$, $D_{x'}^2\varphi$ and
$Dv$.
We note that
$F$ is a smooth function in $t$ and
$$
v_t, \frac{v_t^2}{t}, D_{x'}v_t,
D^2_{x'}u,$$
and that $F$ depends on $x'$ through derivatives of $\varphi$ and $H$ up to order $2$ and $1$, respectively.
Moreover,
$$F\text{ is linear in } \frac{v_t^2}{t}.$$
Then, with Theorem \ref{lemma-d-order l}, the method for regularity along the normal direction in \cite{HanJiang} can be applied to draw the conclusion.

\end{document}